\newtheorem{theorem}{Theorem}[section]
\newtheorem{lemma}[theorem]{Lemma}
\newtheorem{corollary}[theorem]{Corollary}
\newtheorem{proposition}[theorem]{Proposition}
\theoremstyle{definition}
\newtheorem{definition}[theorem]{Definition}
\theoremstyle{remark}
\newtheorem{remark}[theorem]{Remark}
\numberwithin{equation}{section}
\renewcommand\bigskip{\medskip}
\def\to{\rightarrow}
\def\cF{\mathcal{F}}
\def\N{\mathbb N}
\def\cM{\mathcal{M}}
\def\R{\mathbb R}
\def\Z{\mathbb Z}
\newcommand{\iii}{\mathtt{i}}
\newcommand{\jjj}{\mathtt{j}}
\newcommand{\kkk}{\mathtt{k}}
\DeclareMathOperator{\diam}{diam}
\DeclareMathOperator{\spt}{spt}
\DeclareMathOperator{\dimh}{\dim_H}
\begin{document}

\title[]{Covering the Sierpi{\'n}ski carpet with tubes}

\author{Aleksi Py\"or\"al\"a}
\email{aleksi.pyorala@oulu.fi}
\address{Research Unit of Mathematical Sciences,
        P.O.Box 8000, FI-90014,  University of Oulu,
    Finland}
\author{Pablo Shmerkin}
\email{pshmerkin@utdt.edu}
    \address{Department of Mathematics and Statistics, Torcuato Di Tella University, and CONICET, Buenos Aires, Argentina}
\author{Ville Suomala}
\email{ville.suomala@oulu.fi}
\address{Research Unit of Mathematical Sciences,
        P.O.Box 8000, FI-90014,  University of Oulu,
    Finland}
\author{Meng Wu}
\email{meng.wu@oulu.fi}
\address{Research Unit of Mathematical Sciences,
        P.O.Box 8000, FI-90014,  University of Oulu,
    Finland}

\thanks{The work of A.P., V.S., and M.W. has been supported by the Academy of Finland.
A.P. acknowledges the support of the University of Oulu Graduate School. PS has received funding from the European Research Council (ERC) under the European Union's Horizon 2020 research and innovation programme (grant agreement No. 803711)}

\subjclass[2020]{Primary 37C45; Secondary 28A80}
\keywords{tube-null set; orthogonal projection; entropy dimension; variational principle}

\begin{abstract}
We show that non-trivial $\times N$-invariant sets in $[0,1]^d$, such as the Sierpi\'{n}ski carpet and the Sierpi\'{n}ski sponge, are tube-null, that is, they can be covered by a union of tubular neighbourhoods of lines of arbitrarily small total volume. This introduces a new class of tube-null sets of dimension strictly between $d-1$ and $d$. We utilize ergodic-theoretic methods to decompose the set into finitely many parts, each of which projects onto a set of Hausdorff dimension less than $1$ in some direction. We also discuss coverings by tubes for other self-similar sets, and present various applications.
\end{abstract}

\maketitle

\section{Introduction}

We call a \textit{tube} $T$ of width $w = w(T) > 0$ a $w$-neighborhood of some line in $\R^d$, where from now on $d$ is some integer $\ge 2$. A set $K\subset \R^d$ is called \textit{tube-null} if for every $\varepsilon > 0$ there exists a countable family of tubes $\lbrace T_i \rbrace$ such that $K \subset \bigcup_i T_i$ and $\sum_i w(T_i)^{d-1} < \varepsilon.$

The notion of tube-nullity has its roots in harmonic analysis. It was shown by Carbery, Soria, and Vargas \cite[Theorem 4]{CSV} that if $K$ is a tube-null subset of the unit ball $B(0,1)\subset \R^d$, then there exists a function $f \in L^2(\R^d)$ which is identically zero on $B(0,1)$ and for which the Fourier localisations
$$
S_R f(x) = \int_{|\xi| < R} \widehat{f}(x) e^{2\pi i \xi \cdot x}\, d\xi
$$
fail to converge as $R \to \infty$ for every $x \in K$. It is an open problem to characterize all such \emph{divergence sets} for $S_R$; in particular, it is not known if each such set is tube-null. Note that if the assumption $\spt f\subset\R^d\setminus B(0,1)$ is dropped, then it is not even known if the divergence set is Lebesgue null.

The notion of tube-nullity is also very natural from the point of view of geometric measure theory and, along with several variants, it has been considered in many works, see e.g.  \cite{CW, Carbery, Harangi, Orponen, SS, Chen, SS2}. See also \S \ref{ss:rot} for a variant called tube-dimension. Despite the growing literature on tube-null sets, it is often difficult to verify whether a given set is tube-null or not.  Often, but certainly not always,  the connection between tube-nullity and geometric measure theory arises from orthogonal projections: If a set $K\subset\R^d$ may be decomposed into countably many subsets each of which projects onto a Lebesgue null set under some orthogonal projection $P\colon\R^d\to\R^{d-1}$, then it is easy to see that $K$ is tube-null. On the other hand, if $K$ supports a non zero measure $\mu$ such that its orthogonal projections are all absolutely continuous with a uniformly bounded density, then $\mu(T)\lesssim w(T)$ for all tubes, and a simple computation shows that $K$ is not tube-null.
%
%

Since orthogonal projections cannot increase Hausdorff dimension, it is obvious that sets with Hausdorff dimension $<d-1$ are tube-null. Using the Besicovitch-Federer projection theorem, Carbery, Soria and Vargas \cite[Proposition 8]{CSV}  have shown that in $\R^d$, sets with $\sigma$-finite $(d-1)$-dimensional Hausdorff measure are tube-null. Given these facts, the question about tube-nullity is interesting for sets of Hausdorff dimension at least $d-1$. Using a random construction, a variant of the fractal percolation, Shmerkin and Suomala \cite{SS} showed that there are \emph{non} tube-null sets of any dimension $s\in[d-1,d]$; for $s\in ]d-1,d]$ they can even be taken to be Ahlfors-regular. Carbery, Soria, and Vargas had shown this before for the values $s\in]3/2,2]$ by investigating rotationally invariant Cantor sets \cite[Proposition 6]{CSV}. These Cantor targets also provide an interesting example when the dimension is in $]1,3/2[$: They are tube-null, but, as far as we know, no proof using orthogonal projections is available.

Note that tube-nullity itself does not impose any bounds on the dimension of the set: A set of full Hausdorff dimension may easily have a Lebesgue null projection and hence be tube-null. For instance, consider a Cantor set $C\subset[0,1]^{d-1}$ with $\dimh C=d-1$ and $\mathcal{H}^{d-1}(C)=0$ and let $K=C\times [0,1]$. (Here and below, $\dimh$ denotes Hausdorff dimension.) Nevertheless, heuristically it seems reasonable that (absent any special structure as above) sets of larger dimension may have more difficulty being tube-null.

In this work, we investigate the problem of tube-nullity for self-similar sets. Besides the obvious situation in which one of the orthogonal projections onto a hyperplane has $(d-1)$-measure zero, not much is known. A remarkable exception is the von Koch snowflake curve, that was shown to be tube-null by Harangi \cite{Harangi}. In fact, using combinatorial and probabilistic arguments, he showed that the Koch curve may be decomposed into three pieces, each of which projects onto a set of dimension $<1$ in one of the natural directions that appear in the finite level approximation of the fractal curve.

In this paper we extend the class of known tube-null sets by all $\times N$-invariant sets of dimension less than $d$. Included among these are the ``$N$-adic'' self-similar sets, such as the classical Sierpi{\'n}ski carpet in $\R^2$ and the Sierpi{\'n}ski sponge in $\R^3$. See Figure \ref{carpets}. The fact that these sets are tube-null might be surprising, since they are in some sense highly connected, and their dimension is close to maximal.

\begin{figure}[H]
	\includegraphics[width=0.85\textwidth]{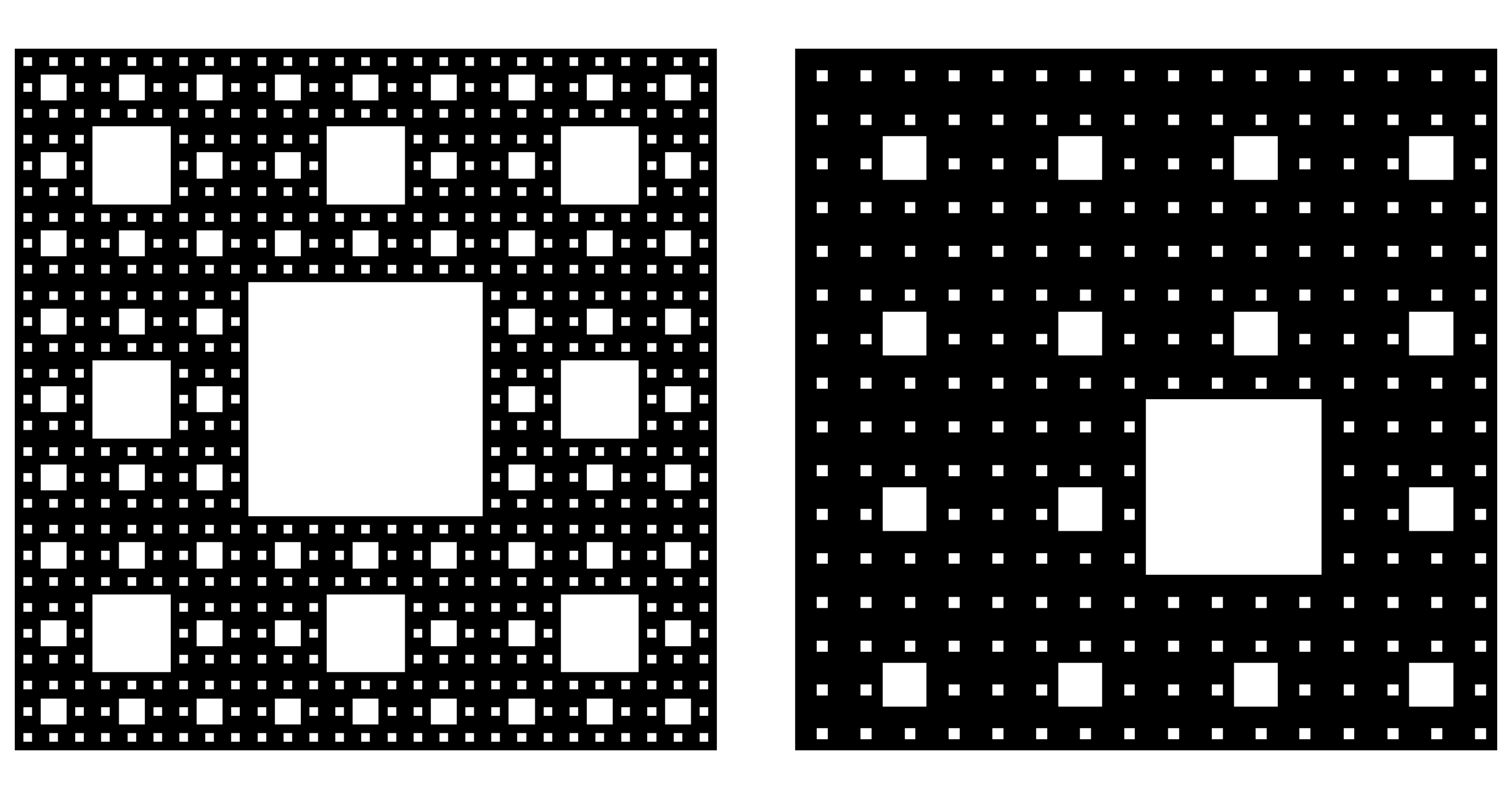}
	\caption{The Sierpi{\'n}ski carpet (left) and a $\times 4$-invariant carpet of dimension $\log 15/\log 4\approx 1.953$ (right). Our main result implies that these sets are tube-null, in a strong sense.}
	\label{carpets}
\end{figure}

Shmerkin and Suomala \cite{SS} showed that stochastically self-similar fractals, like the fractal percolation limit sets, of dimension strictly between $d-1$ and $d$ are \emph{not} tube-null. Since the fractal percolation process on the $N$-adic grid is invariant under $\times N$ in distribution, this highlights a difference between how deterministically and stochastically  self-similar sets differ in their tube covering behaviour.

To formulate our main result, let $T_N$ denote the $\times N$ map on the $d$-dimensional torus identified with $[0,1]^d$, that is, $(x_1,\ldots,x_d)\mapsto (Nx_1 \mod 1,\ldots,Nx_d \mod 1)$. Note that $T_N$ depends on $d$, although we do not make this dependence explicit. In our terminology, a direction is a non-zero vector in $\R^d$, and the projection in direction $v$ is the orthogonal projection onto the line spanned by $v$.
\begin{theorem}\label{thm-principal-1}
Let $K\subsetneq [0,1]^d$ be a closed $T_N$-invariant set. Then, there is $c<1$ and a finite collection $\mathcal{V}$ of directions such that
\[K=\bigcup_{v\in\mathcal{V}}E_v\,,\]
where the projection of $E_v$ in direction $v$ has Hausdorff dimension $<c$.
\end{theorem}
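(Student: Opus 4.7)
The plan is to first establish an ergodic-theoretic bound on projected entropy for $T_N$-invariant measures on $K$, and then translate it into a geometric decomposition via a variational principle for Hausdorff dimension. Let $\mathcal{M}=\mathcal{M}(K,T_N)$ be the weak-$*$-compact space of $T_N$-invariant Borel probabilities on $K$. For $v\in\mathbb{Z}^d\setminus\{0\}$ consider $\chi_v(x)=v\cdot x\bmod 1$, a continuous factor from $(T_N,[0,1]^d)$ onto $(\times N,\mathbb{R}/\mathbb{Z})$; up to an affine folding preserving Hausdorff dimension it coincides with orthogonal projection in direction $v$, so it suffices to produce finite $\mathcal{V}\subset\mathbb{Z}^d\setminus\{0\}$ and $c<1$ with $K=\bigcup_v E_v$ and $\dim_H\chi_v(E_v)<c$.

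The first key observation is that no $\mu\in\mathcal{M}$ is Lebesgue on $[0,1]^d$: since $K$ is closed and proper, $[0,1]^d\setminus K$ is a nonempty open set of positive Lebesgue measure. For each $\mu$ there is then some $v\in\mathbb{Z}^d\setminus\{0\}$ with $\widehat{\mu}(v)\ne 0$, which is equivalent to $\chi_v(\mu)$ not being Lebesgue on the circle; since Lebesgue is the unique $\times N$-invariant measure of maximal entropy, $h_{\chi_v(\mu)}(\times N)<\log N$. The functional $\mu\mapsto h_{\chi_v(\mu)}$ is upper semi-continuous by expansiveness of $\times N$, so by compactness there exist a finite set $\mathcal{V}\subset\mathbb{Z}^d\setminus\{0\}$ and $\varepsilon>0$ such that every $\mu\in\mathcal{M}$ satisfies $h_{\chi_v(\mu)}(\times N)\le\log N-\varepsilon$ for some $v\in\mathcal{V}$.

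To decompose $K$, I would let $F_v\subset\mathbb{R}/\mathbb{Z}$ be the Borel $\times N$-invariant set of points whose $\times N$-empirical measures have every weak-$*$ accumulation point of entropy at most $\log N-\varepsilon$, and set $E_v:=K\cap\chi_v^{-1}(F_v)$. A Bowen/Pfister--Sullivan type variational principle gives
\[
\dim_H F_v\le\frac{\log N-\varepsilon}{\log N}=:c<1,
\]
because any ergodic measure on $F_v$ is almost surely generic for itself, and since its generic points lie in $F_v$ the measure must have entropy at most $\log N-\varepsilon$; an ergodic-decomposition step extends this to every invariant measure on $F_v$. This yields $\dim_H\pi_v(E_v)\le\dim_H F_v<c$, the required projection bound.

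The main obstacle is to verify $K=\bigcup_{v\in\mathcal{V}}E_v$. For $x\in K$, the accumulation set $\sigma(x)\subset\mathcal{M}$ is compact and connected, and $x\in E_v$ amounts to $\sigma(x)\subset C_v:=\{\mu:h_{\chi_v(\mu)}\le\log N-\varepsilon\}$. The $C_v$'s cover $\mathcal{M}$, but connectedness alone does not place $\sigma(x)$ inside a single one, and this is genuinely sharp: for the Sierpi\'nski carpet there exist ``saturated'' points of full Hausdorff dimension with $\sigma(x)=\mathcal{M}$, and one can exhibit pairs of ergodic measures (e.g.\ Lebesgue on the bottom edge and on the left edge of the carpet) that share no rational direction of small projected entropy. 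I expect overcoming this to require a more combinatorial route, such as a pigeonhole argument at the level of $N$-adic digits: the subshift $X_K$ associated to a proper invariant $K$ omits some finite word, so at each position of $x\in K$ at least one coordinate digit must avoid a prescribed value, and promoting this defect to a quantitative frequency bound should force at least one coordinate projection of $x$ into a digit-frequency-type set of Hausdorff dimension $<1$. Coordinating such a frequency-based decomposition with the variational bound above is the delicate part of the argument.
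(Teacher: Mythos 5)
Your overall strategy --- a Fourier obstruction showing that every invariant measure has a rational direction of entropy drop, a compactness argument making the drop uniform over a finite set of directions, and a Bowen-type variational principle converting the entropy bound into a Hausdorff dimension bound for the projection --- is exactly the skeleton of the paper's proof, and your route to the uniform entropy drop (via $\chi_v\mu$ not being Lebesgue on the circle, uniqueness of the measure of maximal entropy for $\times N$, and upper semicontinuity of entropy) is a legitimate and arguably cleaner alternative to the paper's, which instead passes through the weak separation condition for the projected IFS, a Garsia-type entropy argument (``non-absolutely continuous $\Rightarrow$ entropy dimension $<1$''), and semicontinuity of the projected dimension. The paper pays this extra price because it measures the drop by the entropy dimension of $P_v\mu$ on $\R$ rather than by the dynamical entropy of $\chi_v\mu$, and the projected $N$-adic IFS has overlaps that must be controlled; working directly with $v\cdot x \bmod 1$ and the full $N$-adic shift avoids that.

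The genuine gap is in the decomposition, and it is exactly where you flagged it --- but the obstacle you describe has a one-line resolution that you missed. You define $F_v$ (hence $E_v$) by requiring that \emph{every} accumulation point of the empirical measures have small entropy; as you correctly observe, points whose accumulation set straddles several of the sets $C_v$ then belong to no $E_v$, and your proposed repair via digit-frequency pigeonholing is not how the argument closes (that idea is used in the paper only for Proposition \ref{prop: homo-small}). The fix is to define $E_v$ existentially: $x\in E_v$ iff \emph{some} $\mu\in V(x)$ satisfies $h(\chi_v\mu)\le \log N-\varepsilon$. Since $V(x)$ is a nonempty subset of $\cM_{\rm inv}$ and the sets $C_v$, $v\in\mathcal{V}$, cover $\cM_{\rm inv}$, the covering $K=\bigcup_v E_v$ becomes trivial. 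The point you are missing is that Bowen's theorem (\cite[Theorem 2]{Bowen}, Lemma \ref{lemma-Bowen,1} above) is stated precisely for this existential set: the topological entropy of $\{y:\exists\,\nu\in V(y)$ with $h(\nu)\le t\}$ is at most $t$. So the dimension bound $\dimh \chi_v(E_v)\le (\log N-\varepsilon)/\log N$ survives unchanged, whereas the ``ergodic measures supported on $F_v$ are generic for themselves'' argument you sketch only justifies the weaker ``for all accumulation points'' version. With the existential definition and Bowen's lemma in place of your final paragraph, your proof closes; note also that your ``saturated point'' example ($V(x)$ containing both $\lambda\times\delta_0$ and $\delta_0\times\lambda$) is handled correctly by the existential $E_v$, since such an $x$ lands in $E_{(0,1)}$ (say) because one of its accumulation points already has zero projected entropy in that direction.
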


\begin{corollary} \label{cor:main}
Let $K\subsetneq [0,1]^d$ be a closed, $T_N$-invariant set. Then $K$ is tube-null. Moreover, there is $c<1$ such that for every $\varepsilon>0$ there are hyperplanes $(H_i)$ and numbers $r_i$ with $\sum_i r_i^{c}<\varepsilon$, such that $K\subset \bigcup_i H_i(r_i)$. Here $H(r)$ denotes the $r$-neighborhood of $H$.
\end{corollary}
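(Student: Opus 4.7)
The plan is to deduce the corollary directly from Theorem \ref{thm-principal-1} by a standard two-step covering argument: first I would pass from the projection-dimension bound to a covering by slabs (hyperplane neighbourhoods), and then from slabs to tubes.

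First, I would apply Theorem \ref{thm-principal-1} to obtain $c<1$, a finite set $\mathcal{V}$ of directions, and a decomposition $K=\bigcup_{v\in\mathcal{V}}E_v$ with $\dimh P_v(E_v)<c$, where $P_v$ denotes the projection in direction $v$. Given $\varepsilon>0$, for each $v\in\mathcal{V}$ I would cover $P_v(E_v)$ by intervals $(I_{v,i})_i$ of length at most $1$ satisfying $\sum_i|I_{v,i}|^c<\varepsilon/|\mathcal{V}|$, which is possible by the definition of Hausdorff dimension (and the strict inequality $\dimh P_v(E_v)<c$). Each preimage $P_v^{-1}(I_{v,i})$ is the $(|I_{v,i}|/2)$-neighbourhood of a hyperplane orthogonal to $v$; collecting these over all $v\in\mathcal{V}$ yields a cover of $K$ by hyperplane neighbourhoods $H_j(r_j)$ with $\sum_j r_j^c<\varepsilon$, which establishes the ``moreover'' statement.

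Second, I would refine this slab covering into a tube covering to conclude tube-nullity. When $d=2$ there is nothing to do, since a hyperplane is already a line, and since $r_j\le 1$ and $c<1$ we have $\sum_j r_j\le \sum_j r_j^c<\varepsilon$. For $d\ge 3$, since $K\subset[0,1]^d$ I may truncate each slab to a bounded region; a bounded slab of width $r\le 1$ is easily covered by at most $C_d r^{-(d-2)}$ Euclidean tubes of width $r$ for some dimensional constant $C_d$, each of which contributes $r^{d-1}$ to the tube-null sum, so each truncated slab contributes at most $C_d r$. Summing and using $r_j\le r_j^c$ yields a total tube-null sum bounded by $C_d\varepsilon$, as required.

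The argument is essentially routine given Theorem \ref{thm-principal-1}; the only mild point to check is that the exponent $c<1$ comfortably absorbs the geometric loss in the slab-to-tube step, which it does with room to spare. The genuine substance of the corollary lies entirely in Theorem \ref{thm-principal-1}.
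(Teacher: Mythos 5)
Your argument is correct and is essentially identical to the paper's own deduction of the corollary: cover each projection $P_v(E_v)$ by intervals with $\sum_i r_i^c$ small, pull back to hyperplane neighbourhoods, and then cover each slab intersected with $[0,1]^d$ by $\sim r_i^{2-d}$ tubes of width $r_i$. The bookkeeping with $r_j\le r_j^c$ for $r_j\le 1$ is exactly the point the paper leaves implicit, and you have verified it correctly.
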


To deduce the corollary from Theorem \ref{thm-principal-1}, for each $v\in\mathcal{V}$, we cover the angle $v$-projection of $E_v$ by intervals $B(x_i,r_i)$ with $\sum_i r_i^c$ small, pull-them back to obtain neighborhoods of hyperplanes, and take the union over all $v$. The claim that $K$ is tube-null follows by covering $H_i(r_i)\cap[0,1]^d$ by $\sim r_i^{2-d}$ tubes of width $r_i$.

Let us  briefly discuss the quantitative aspect of these results. Both the set of directions $\mathcal{V}$ and the number $c<1$ provided by Theorem \ref{thm-principal-1} depend on $K$. More precisely, they depend only on $d, N$, and the number
\[\alpha(K)=\sup\{t>0\,:\,\exists B(y,t)\subset[0,1]^d\setminus K\}\,.\]
In particular, we can provide an explicit collection $\mathcal{V}$ depending only on $d,N$, and $\alpha(K)$ that satisfies the claim of Theorem \ref{thm-principal-1}, see Section \ref{sec:R_0}.
The existence of the constant $c$ is deduced via a compactness argument and we don't have an explicit bound on it in addition to $c<1$. Moreover, as will be seen, the definition of the sets $E_v$ is rather abstract in terms of typical orbits of certain invariant measures. In particular, we are not able to provide an explicit definition for $E_v$. It is obvious that they cannot be open in the relative topology of $K$ and by the Baire category theorem, not all of them can be closed.

We now sketch the main idea of the proof of Theorem \ref{thm-principal-1}.
We first observe, by investigating the Fourier coefficients of the invariant measures and their projections, that there exists a finite collection $\mathcal{V}$ of rational directions such that for any $T_N$-invariant measure on $K$, the orthogonal projection of the measure in at least one of these directions is not absolutely continuous. We proceed by showing that for the projected measure, non-absolute continuity actually implies that the entropy dimension is $<c<1$. This is shown in Proposition \ref{prop max dim implies abs cont}, using the weak-separation condition for the projected iterated function system. A compactness argument then implies that $c$ can be taken to be uniform over all invariant measures.
We now define the sets $E_v$, using the aforementioned directions $v\in\mathcal{V}$.
We note that the orbit of each $x\in K$ under $T_N$ equidistributes along some subsequence for (at least) some $T_N$-invariant measure $\mu$.
If the projection of $\mu$ in the direction $v$ satisfies $\dim\mu<c$, we include $x$ in $E_v$. It is then immediate that $\lbrace E_v\rbrace_{v\in\mathcal{V}}$ is a cover for $K$.
Finally, to show that the projection of each $E_v$ in the direction $v$ has Hausdorff dimension $<c$,
we use Bowen's lemma, a form of the variational principle: an upper bound for the entropies of invariant measures on a set also serves as an upper bound for the topological entropy of the set. See Lemma \ref{lemma-Bowen,1}.

While Theorem \ref{thm-principal-1} applies to many self-similar sets,  $\times N$-invariance is crucial in the proof. We are able to prove that many homogeneous self-similar sets with no rotations are also tube-null.
\begin{proposition}\label{prop: homo-small}
Let $K\subset\R^d$ be the attractor of an iterated function system $\{f_i(x)=rx+\lambda_i\}_{i=0}^{m-1}$. If $-\log_2r>\log_2 m-\frac{2}{m}$, then there is  a finite collection $\mathcal{V}$ of directions  and a cover $(E_v)_{v\in\mathcal{V}}$ of $K$, such that the projection of $E_v$ in direction $v$ has Hausdorff dimension $\le \frac{\log_2 m-2/m}{-\log_2 r }<1$. In particular, $K$ is tube-null.
\end{proposition}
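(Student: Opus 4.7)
The plan is to construct $\mathcal{V}$ from pairwise projection coincidences and split $K$ using symbolic frequencies. For each unordered pair $\{i,j\}\subset\{0,\dots,m-1\}$, choose a unit vector $v_{ij}\in\R^{d}$ perpendicular to $\lambda_i-\lambda_j$ (possible because $d\ge 2$) and set $\mathcal{V}=\{v_{ij}\}$. In direction $v_{ij}$ one has $\pi_{v_{ij}}(\lambda_i)=\pi_{v_{ij}}(\lambda_j)$, so any two length-$n$ words $\omega,\omega'\in\{0,\dots,m-1\}^{n}$ produce the same projected cylinder $\pi_{v_{ij}}(f_{\omega}(K))$ whenever they agree up to swaps of $i$ and $j$.

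Via the natural coding map $\pi\colon\{0,\dots,m-1\}^{\N}\to K$ and the empirical frequencies $p^{n}_{ij}(\omega)=n^{-1}\#\{k\le n:\omega_k\in\{i,j\}\}$, I would take $E_{ij}:=\pi\bigl\{\omega:\limsup_{n}p^{n}_{ij}(\omega)\ge 2/m\bigr\}$. Each symbol lies in exactly $m-1$ pairs, so $\sum_{\{i,j\}}p^{n}_{ij}(\omega)\equiv m-1$, and by subadditivity of $\limsup$ on finite sums $\sum_{\{i,j\}}\limsup_n p^{n}_{ij}(\omega)\ge m-1$; averaging over pairs, some pair achieves $\limsup\ge 2/m$, so $\bigcup_{\{i,j\}}E_{ij}=K$.

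The core estimate is combinatorial. Under the $\{i,j\}$-swap equivalence, a length-$n$ word with $p^{n}_{ij}=p$ has orbit of size $2^{np}$, so the number of equivalence classes with frequency $p$ is $\binom{n}{np}(m-2)^{n(1-p)}$. Writing $f(p)=H(p)+(1-p)\log_2(m-2)$ with $H$ the binary entropy, the derivative $f'(p)=\log_2((1-p)/p)-\log_2(m-2)$ vanishes at $p=1/(m-1)$, and since $2/m\ge 1/(m-1)$ for $m\ge 2$, $f$ is nonincreasing on $[2/m,1]$. A short rearrangement using $H(2/m)=(2/m)\log_2(m/2)+(1-2/m)\log_2(m/(m-2))$ yields $f(2/m)=\log_2 m-2/m$, so the number of equivalence classes of length-$n$ words with $p^{n}_{ij}\ge 2/m$ is at most $n\cdot 2^{n(\log_2 m-2/m)}$.

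To finish, for any $M$, $\pi_{v_{ij}}(E_{ij})\subset\bigcup_{n\ge M}\bigcup_{\omega}\pi_{v_{ij}}(f_{\omega}(K))$, with the inner union running over class representatives $\omega\in\{0,\dots,m-1\}^{n}$ satisfying $p^{n}_{ij}(\omega)\ge 2/m$, and each projected cylinder an interval of length at most $\diam(K)\,r^{n}$. For any $s>(\log_2 m-2/m)/(-\log_2 r)$, the tail $\sum_{n\ge M}n\cdot 2^{n(\log_2 m-2/m)}\cdot r^{ns}$ is a convergent geometric series that vanishes as $M\to\infty$, so $\mathcal{H}^{s}(\pi_{v_{ij}}(E_{ij}))=0$; letting $s$ decrease to the threshold gives the claimed dimension bound. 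Tube-nullity then follows as in the deduction of Corollary 1.2 from Theorem 1.1. The conceptual obstacle is the frequency-based decomposition itself: a naive single-coincidence argument only gives the weaker bound $\log_2(m-1)$, and it is the restriction $p\ge 2/m$ combined with the monotonicity of $f$ past its maximum at $1/(m-1)$ that produces the sharper exponent $\log_2 m-2/m$.
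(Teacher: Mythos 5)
Your decomposition is exactly the paper's: the directions $v_{ij}$ orthogonal to $\lambda_i-\lambda_j$ (so that $f_i$ and $f_j$ have identical projections in direction $v_{ij}$), the frequency sets $E_{ij}$ defined by $\limsup_n p^n_{ij}\ge 2/m$, and the pigeonhole identity $\sum_{\{i,j\}}p^n_{ij}\equiv m-1$ giving the covering of $K$ (this is the paper's Lemma~\ref{lemma: frequence1}). Where you genuinely diverge is the final dimension estimate. The paper passes to the reduced alphabet $\Gamma_v=\Gamma\setminus\{j\}$, notes that every weak$^*$ accumulation point $\mu$ of the empirical measures of a point of the projected frequency set satisfies $\mu[i]\ge 2/m$, hence $h(\mu,\sigma)\le\log m-2/m$, and then invokes Bowen's lemma (Lemma~\ref{lemma-Bowen,1}) together with \eqref{eq:dimension-from-entropy}. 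You instead count swap-equivalence classes of length-$n$ words directly, $\binom{n}{np}(m-2)^{n(1-p)}$ classes at frequency $p$, and use that $f(p)=H(p)+(1-p)\log(m-2)$ peaks at $p=1/(m-1)\le 2/m$ and so is maximized on $[2/m,1]$ at $p=2/m$, where $f(2/m)=\log m-2/m$; then you run the standard Besicovitch--Eggleston covering sum. The arithmetic checks out, and your route is more elementary and self-contained (no variational principle needed), at the cost of being tied to the full shift; the paper's entropy formulation is the one that transfers to the $\times N$-invariant setting of Theorem~\ref{thm-principal-1}.

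One technical slip needs repair: you cover $P_{v_{ij}}(E_{ij})$ by projected cylinders of generations $n\ge M$ whose representatives satisfy $p^n_{ij}(\omega)\ge 2/m$, but $\limsup_n p^n_{ij}(\omega)\ge 2/m$ does not guarantee that $p^n_{ij}(\omega)\ge 2/m$ for infinitely many $n$; the frequency may approach $2/m$ strictly from below, and such $\omega$ escape your cover. The standard fix is to cover, for each $\varepsilon>0$, using representatives with $p^n_{ij}(\omega)\ge 2/m-\varepsilon$; the class count becomes $\lesssim n\,2^{nf(2/m-\varepsilon)}$, you obtain $\dimh P_{v_{ij}}(E_{ij})\le f(2/m-\varepsilon)/(-\log_2 r)$, and continuity of $f$ lets you send $\varepsilon\to 0$. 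With that one-line repair the argument is complete and yields the stated bound.
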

This proposition gives many new examples of tube-null self-similar sets with dimension $>1$ and without Lebesgue null projections. Note that we are not assuming any separation condition for the pieces $f_i(K)$, nor are we requiring a grid structure. See Figure \ref{sss} in \S\ref{ss:rot} below for a non-trivial example of a self-similar set to which Proposition \ref{prop: homo-small} applies. In \S\ref{ss:rot} we show that, on the other hand, planar self-similar sets of dimension $>1$ such that the generating IFS spans an infinite rotation group do not admit such a partition $E_v$. While stopping short of proving that they are not tube-null, we show that their \emph{tube dimension} is $1$, see \S\ref{ss:rot} for details.

The meta structure of the proof of Proposition \ref{prop: homo-small} is similar to that of Theorem \ref{thm-principal-1}, but the details are much simpler.
For instance, the set of directions $\mathcal{V}$ is given by the directions of exact overlap: For each pair $0\le i<j\le m-1$, there is a direction $v$ such that the pieces $f_i(K)$ and $f_j(K)$ have identical projection onto this direction. The sets $E_v$ are defined via digit frequencies in the symbolic space, and this allows us to conclude the quantitative bound $\frac{\log_2 m-2/m}{-\log_2 r }$ for the dimension. We include a  proof for Proposition \ref{prop: homo-small} in the last section along with the final remarks. Despite the similarities to  Theorem \ref{thm-principal-1}, the proof is self-contained and, it could serve as a good warm up for the more involved arguments in Sections \ref{sec:ws}--\ref{sec:proof}.

The paper is organized as follows. In Section \ref{sec_prel}, we set up the notation and recall the necessary tools from ergodic theory and fractal geometry. In Section \ref{sec:ws}, we proceed by showing that shift invariant measures on homogeneous affine iterated function systems on the real-line are either absolutely continuous or have dimension $<1$. This observation is crucial for the proof of Theorem \ref{thm-principal-1}, which is the content of Section \ref{sec:proof}. In the Section \ref{ss:four}, we apply the results of Section \ref{sec:ws} to conclude that for each invariant measure, some rational projection has dimension $<c$. We note that this remains true also for measures invariant under the iterates $T_N^m$ and in Section \ref{ss:hl}, we build such a high-level iterated function system to effectively estimate the dimension of the projections of invariant measures via entropy estimates. In Section \ref{sec:final_proof}, the proof of Theorem is completed by defining the covers $(E_v)_v$ and estimating the dimension of their projections using Bowen's lemma.
In the final Section \ref{sec:rems}, we provide a quantitative bound on $\mathcal{V}$, provide a proof for Proposition \ref{prop: homo-small} and discuss a few additional results and applications related to tube-dimension and isotropic doubling measures.

\section{Preliminaries}\label{sec_prel}


In this section we review some useful concepts from fractal geometry and ergodic theory, setting up notation along the way.
Let $\Gamma$ be a finite set of cardinality $\#\Gamma\ge 2$. A family of contracting functions $\cF = \lbrace f_i \rbrace_{i \in \Gamma}$ on a complete metric space is called an \emph{iterated function system} (IFS). It is well-known that for an IFS $\cF$ there exists a unique compact set $K$, called the \emph{attractor} of $\cF$, satisfying
$$
K = \bigcup_{i \in \Gamma} f_i(K).
$$
If the contractions $f_i$ are similitudes, i.e. $|f_i(x) - f_i(y)| = r_i |x-y|$ for some $r_i\in (0,1)$ and all $x, y$, we call $\cF$ and its attractor $K$ self-similar. If $r_i \equiv r$, we call the IFS homogeneous. For an $n \geq 2$ and $\iii \in \Gamma^n$, we use the notations $f_\iii = f_{i_1} \circ \cdots \circ f_{i_n}$ and $K_\iii = f_\iii(K)$. 

For a word  $\iii = (i_1, i_2, \ldots)$, $i_j\in\Gamma$, of length $\ge k$ (possibly infinite), we denote  $\iii|_k = (i_1, \ldots, i_k)$. For $\iii \in \Gamma^n$, we define the cylinder $[\iii]$ as the set of all infinite words $\jjj \in \Gamma^\N$ for which $\jjj|_n = \iii$. By $\pi$ we denote the natural coding of $K$ by $\Gamma^\N$,
$$
\pi(\iii) = \bigcap_{k=1}^\infty K_{\iii|_k} = \lim_{k\to\infty} f_{i_1}\cdots f_{i_k}(0).
$$

Let $\sigma\colon\Gamma^\N\to\Gamma^\N$, $\sigma(i_1, i_2, \ldots ) =(i_2, i_3, \ldots)$ denote the left shift. The \emph{symbolic space} $\Gamma^\N$ is always equipped with the topology generated by the cylinder sets. By a measure on $\Gamma^\N$ we always mean a finite Borel measure.

The push-forward of a measure $\mu$ through a measurable function $f$ is denoted by $f\mu = \mu \circ f^{-1}$. Given a compact topological space $X$ and a continuous transformation $S: X \to X$, the pair $(X,S)$ is called a \textit{dynamical system}. A measure $\mu$ on $X$ is called $S$\textit{-invariant} if $S\mu = \mu$; we let $\cM_{\rm inv}(X,S)$ denote the set of $S$-invariant probability measures on $X$. 
On the symbolic space $X = \Gamma^\N$,  the space of probability measures is (weak-$^*$) compact and thus, as a closed subset of a compact space, the set of invariant measures is also compact.

Throughout the paper $\log$ denotes logarithm to base $2$. Given a measure space $(X, \mu)$ and a measurable finite partition $\mathcal{A}$ of $X$, the \emph{Shannon entropy} of $\mu$ with respect to the partition $\mathcal{A}$ is defined
$$
H(\mu, \mathcal{A}) = -\sum_{A \in \mathcal{A}} \mu(A) \log \mu(A).
$$
We will make use of the following elementary property of $H$: If $p=(p_i)_{i=1}^M$ is a probability vector, then
\begin{equation} \label{eq-entropy-convexity}
H\left(\sum_{i=1}^M p_i \mu_i,\mathcal{A}\right) \le H(p)+\sum_{i=1}^M p_i  H(\mu_i,\mathcal{A}) \le \log M+\sum_{i=1}^M p_i  H(\mu_i,\mathcal{A}),
\end{equation}
where $H(p)=\sum_{i=1}^M p_i\log(1/p_i)$.

If another partition $\mathcal{B}$ is given, the conditional entropy is defined as
\[
H(\mu, \mathcal{A}|\mathcal{B}) = \sum_{B\in\mathcal{B}:\mu(B)>0} \mu(B) H(\mu_B,\mathcal{A}),
\]
where $\mu_B=\tfrac{1}{\mu(B)}\mu|_B$. Then it holds that
\begin{equation} \label{eq-entropy-refinement}
H(\mu,\mathcal{A}\vee \mathcal{B}) = H(\mu,\mathcal{A})+H(\mu,\mathcal{B}|\mathcal{A}) \le H(\mu,\mathcal{A})+H(\mu,\mathcal{B}).
\end{equation}
where $\mathcal{A}\vee\mathcal{B}$ is the common refinement of the partitions $\mathcal{A}$ and $\mathcal{B}$.
A consequence of this is the following continuity property of $H$ with respect to the partition. 
\begin{lemma}\label{lemma almost continuity of entropy}
Let $(X, \mu)$ be a measure space, and let $\mathcal{A} , \mathcal{B}$ be  two
finite measurable partitions of $X$. Suppose that for some $M\in\N$ it holds that each element
of $\mathcal{B}$ intersects at most $M$ elements of $\mathcal{A}$, and vice versa. Then
$$
|H(\mu,\mathcal{A})-H(\mu,\mathcal{B})|\le \log M.
$$
\end{lemma}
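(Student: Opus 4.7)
The plan is to bootstrap off of the chain rule \eqref{eq-entropy-refinement} applied to the common refinement $\mathcal{A}\vee\mathcal{B}$, plus the elementary fact that a probability measure supported on at most $M$ atoms has Shannon entropy at most $\log M$.

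First, I would apply \eqref{eq-entropy-refinement} in both orders to obtain
\[
H(\mu,\mathcal{A}\vee\mathcal{B}) = H(\mu,\mathcal{A}) + H(\mu,\mathcal{B}\,|\,\mathcal{A}) = H(\mu,\mathcal{B}) + H(\mu,\mathcal{A}\,|\,\mathcal{B}).
\]
Rearranging the two expressions gives the identity
\[
H(\mu,\mathcal{A}) - H(\mu,\mathcal{B}) = H(\mu,\mathcal{A}\,|\,\mathcal{B}) - H(\mu,\mathcal{B}\,|\,\mathcal{A}),
\]
so the lemma reduces to showing that each conditional entropy on the right is at most $\log M$.

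Next, I would unpack the definition of conditional entropy. For $H(\mu,\mathcal{B}\,|\,\mathcal{A})$, it is by definition the weighted average
\[
\sum_{A\in\mathcal{A}:\mu(A)>0}\mu(A)\,H(\mu_A,\mathcal{B}).
\]
By the hypothesis on $\mathcal{A}$ and $\mathcal{B}$, the atom $A$ meets at most $M$ elements of $\mathcal{B}$, so $\mu_A$ is concentrated on at most $M$ atoms of $\mathcal{B}$ and hence $H(\mu_A,\mathcal{B})\le \log M$ (this is the standard bound, obtained by applying \eqref{eq-entropy-convexity} with the $M$-term probability vector of masses of $\mu_A$ on those atoms, or equivalently by Jensen's inequality). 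Summing over $A$ and using $\sum_A \mu(A)\le 1$ yields $H(\mu,\mathcal{B}\,|\,\mathcal{A})\le \log M$. The symmetric hypothesis gives $H(\mu,\mathcal{A}\,|\,\mathcal{B})\le \log M$ by the identical argument with the roles of $\mathcal{A}$ and $\mathcal{B}$ swapped.

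Combining the two bounds with the identity above gives $|H(\mu,\mathcal{A})-H(\mu,\mathcal{B})|\le \log M$. There is no real obstacle: the entire content is that refinement increases entropy by an amount bounded by the entropy of the fibers, and the combinatorial hypothesis bounds each fiber's support by $M$.
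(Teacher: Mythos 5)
Your proof is correct and follows exactly the route the paper intends: the lemma is stated there without proof, only as ``a consequence of'' the chain rule \eqref{eq-entropy-refinement}, and your argument---bounding both conditional entropies $H(\mu,\mathcal{A}\,|\,\mathcal{B})$ and $H(\mu,\mathcal{B}\,|\,\mathcal{A})$ by $\log M$ via the support bound on each fiber---is the standard way to fill that in. No gaps.
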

%
%
%
%
%

If $\mu$ is a measure on the real line and $D_n(\R)$ denotes the partition of $\R$ into dyadic intervals of length $2^{-n}$, we use the notation
$$
H_n(\mu) = H(\mu, D_n(\R))
$$
and refer to this number as the $n$-scale entropy of $\mu$. The following quantity, often called the \emph{(lower) entropy dimension} of $\mu$, is what we adapt as our definition of dimension for measures:
$$
\dim \mu = \liminf_{n \to \infty} \frac{H_n(\mu)}{n}.
$$

On $(\Gamma^\N,\sigma)$, we also consider the \emph{(measure theoretic) entropy} for invariant  probability measures, defined by
$$
h(\mu, \sigma) = -\lim_{n \to \infty} \dfrac{1}{n}\sum_{\iii \in \Gamma^n} \mu[\iii] \log \mu[\iii]=\inf_{n\in\N}\dfrac{1}{n}\sum_{\iii \in \Gamma^n} -\mu[\iii] \log \mu[\iii]\,.
$$

%
%
%
%
%
%
%
%

We now recall Bowen's definition of topological entropy in the context of the symbolic space $(\Gamma^\N,\sigma)$.
Fix an open cover $\mathcal{A}$ of $\Gamma$. Given $B\subset \Gamma^\N$, we define
\[
n_{\mathcal{A}}(B) = \sup\{ n: \text{ for all } k=1,\ldots, n, \text{ there is } A_k\in\mathcal{A} \text{ containing } \sigma^k(B)\}\,,
\]
where we allow $n=+\infty$ and the supremum of the empty set is considered to be $0$. We set $\diam_{\mathcal{A}}(B) = 2^{-n_{\mathcal{A}}(B)}$. (We use exponential to base $2$ in order to match the convention that our logarithms are always to base $2$.) We now define $h_{\textrm{top}}(E,\mathcal{A})$ by following the definition of Hausdorff dimension, using $\diam_{\mathcal{A}}$ instead of the metric diameter: set
\begin{align*}
m_{\mathcal{A}}^s(E) &= \lim_{\varepsilon\to 0} \inf \left\{ \sum_i \diam_{\mathcal{A}}(B_i)^s: E\subset \bigcup_i B_i, \diam_{\mathcal{A}}(B_i)<\varepsilon \right\},\\
h_{\textrm{top}}(E,\mathcal{A}) &= \inf\{s:  m_{\mathcal{A}}^s(E)=0 \}.
\end{align*}
Finally, the \emph{topological entropy} $h_{\rm{top}}(E,\sigma)$ is the supremum of $h_{\textrm{top}}(E,\mathcal{A})$ over all finite open covers $\mathcal{A}$ of $\Gamma^\N$.

It follows easily from the definitions that if $\cF$ is a homogeneous IFS with contraction ratio $r$ associated to the symbolic space $(\Gamma^\N, \sigma)$ and  $E \subset \Gamma^\N$, 
then
\begin{equation} \label{eq:dimension-from-entropy}
\dimh(\pi E) \leq \dfrac{h_{\rm top}(E, \sigma)}{-\log r}.
\end{equation}
%
%
%
%
%

The variational principle in the form of the following Bowen's lemma is crucial for us.
For $\iii \in \Gamma^\N$, we denote by $V(\iii)$ the collection of the weak$^*$ accumulation points of the sequence
$
\frac{1}{n}\sum_{k=0}^{n-1}\delta_{\sigma^{k}(\iii)}
$,
where $\delta_\jjj$ denotes the Dirac unit mass located at $\jjj\in\Gamma^\N$.

\begin{lemma}[{\cite[Theorem 2]{Bowen}}]  \label{lemma-Bowen,1}
	Consider a symbolic space $(\Gamma^\N,\sigma)$ and let $t\ge 0$. Then the topological entropy of the set
	$$
	\left\{\iii\in \Gamma^\N: \exists \mu\in V(\iii) \textrm{ such that } h(\mu,\sigma)\le t \right\}
	$$
	is at most $t$.
\end{lemma}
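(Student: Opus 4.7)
The plan is to show $h_{\rm{top}}(E, \sigma) \le t$ for $E = \{\iii : \exists\, \mu \in V(\iii),\; h(\mu, \sigma) \le t\}$. Fix $\varepsilon > 0$ and a finite open cover $\mathcal{A}$ of $\Gamma^\N$; refining if necessary, we may assume $\mathcal{A}$ is the partition into cylinders $\mathcal{P}_L = \{[\jjj] : \jjj \in \Gamma^L\}$ for some $L$, so that any cylinder $[\jjj]$ of length $n \ge L$ satisfies $\diam_{\mathcal{A}}([\jjj]) \le 2^{-(n-L)}$.

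I would first observe that $\mathcal{M}_t := \{\mu \in \cM_{\rm{inv}}(\Gamma^\N,\sigma) : h(\mu,\sigma) \le t\}$ is weak-$^*$ compact. This follows from the identity $h(\mu,\sigma)=\inf_N N^{-1}H(\mu,\mathcal{P}_N)$ recalled in the excerpt: each $\mu \mapsto H(\mu,\mathcal{P}_N)$ is weak-$^*$ continuous (cylinders are clopen), so $h$ is weak-$^*$ upper semi-continuous. For each $\mu \in \mathcal{M}_t$ choose $N_\mu \in \N$ with $N_\mu^{-1} H(\mu, \mathcal{P}_{N_\mu}) \le t + \varepsilon/3$, a tolerance $\delta_\mu > 0$ (to be pinned down in the next paragraph), and a weak-$^*$ neighbourhood $U_\mu$ of $\mu$ small enough that $|\nu([\jjj])-\mu([\jjj])| < \delta_\mu$ for every $\nu \in U_\mu$ and every $\jjj \in \Gamma^{N_\mu}$. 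By compactness of $\mathcal{M}_t$, extract a finite subcover $U_{\mu_1}, \ldots, U_{\mu_r}$, and decompose $E = \bigcup_{i=1}^r E_i$ by placing $\iii \in E_i$ whenever some $\mu \in V(\iii)$ lies in $U_{\mu_i}$.

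The main estimate is then a counting bound: for fixed $i$, the number of words $w \in \Gamma^n$ whose $N_{\mu_i}$-block empirical distribution is $\delta_{\mu_i}$-close (in the supremum norm over $\Gamma^{N_{\mu_i}}$) to $(\mu_i([\jjj]))_{\jjj\in\Gamma^{N_{\mu_i}}}$ is at most $2^{n(t+\varepsilon)}$ for every sufficiently large $n$, provided $\delta_{\mu_i}$ was chosen small enough. This is the method of types: one decomposes $w$ into its $N_{\mu_i}$ disjoint-block type vectors (one for each starting offset $s\in\{0,\ldots,N_{\mu_i}-1\}$) and applies Stirling's estimate to the resulting multinomial coefficients, bounding the total count by $\mathrm{poly}(n)\cdot 2^{n H(\mu_i,\mathcal{P}_{N_{\mu_i}})/N_{\mu_i}}$, which is $\le 2^{n(t+\varepsilon)}$ for large $n$. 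Now, if $\iii \in E_i$ with $\mu \in V(\iii) \cap U_{\mu_i}$ realised along a subsequence $n_k$, then $\iii|_{n_k}$ has $N_{\mu_i}$-block empirical distribution $\delta_{\mu_i}$-close to $\mu_i$ for large $k$; so $E_i$ is covered, for each such $n_k$, by at most $2^{n_k(t+\varepsilon)}$ cylinders of length $n_k$, each of $\diam_{\mathcal{A}}$-diameter $\le 2^{-(n_k-L)}$. The resulting sum $\sum \diam_{\mathcal{A}}^{\,t+\varepsilon}$ tends to zero along $n_k$, so $m_{\mathcal{A}}^{t+\varepsilon}(E_i)=0$ and $h_{\textrm{top}}(E_i,\mathcal{A}) \le t + \varepsilon$.

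Summing over $i$ gives $h_{\textrm{top}}(E,\mathcal{A}) \le t+\varepsilon$, and letting $\varepsilon \to 0$ and $\mathcal{A}$ range over all finite open covers yields the claim. The principal obstacle is the method-of-types count in the third paragraph: because overlapping $N$-block frequencies do not factorise, one must treat the $N_{\mu_i}$ disjoint-offset distributions separately and absorb the resulting multiplicative factors into $\varepsilon$. All other ingredients---upper semi-continuity of $h$, a finite subcover of $\mathcal{M}_t$, and the Hausdorff-style passage from cylinder covers to $m_{\mathcal{A}}^{t+\varepsilon}$---are routine.
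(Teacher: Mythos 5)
The paper offers no proof of this lemma --- it is quoted directly as Theorem 2 of \cite{Bowen} --- so there is no in-paper argument to compare against. Your blind proof is the standard self-contained route for full shifts, and it is essentially Bowen's own argument made concrete: reduce to the clopen cylinder covers $\mathcal{P}_L$ (legitimate, since refining a cover only increases $h_{\textrm{top}}(E,\cdot)$ and every finite open cover of $\Gamma^\N$ is refined by some $\mathcal{P}_L$), use upper semi-continuity of $\mu\mapsto h(\mu,\sigma)=\inf_N N^{-1}H(\mu,\mathcal{P}_N)$ to get compactness of $\{\mu: h(\mu,\sigma)\le t\}$ and a finite decomposition $E=\bigcup_{i}E_i$, then a method-of-types count. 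The outline is correct, and you correctly isolate the two technical irritants (overlapping block frequencies; the passage from a count to $m^s_{\mathcal{A}}$).

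Two steps need repair before this is a proof. First, the covering step as written is false: ``$E_i$ is covered, for each such $n_k$, by at most $2^{n_k(t+\varepsilon)}$ cylinders of length $n_k$'' cannot hold at a single scale, because the subsequence along which the empirical measures of $\iii$ enter $U_{\mu_i}$ depends on $\iii$; a point whose empirical measures oscillate has good times only along a sparse, $\iii$-dependent sequence. What is true is that every $\iii\in E_i$ lies in $\bigcup_{w\in G_n}[w]$ for infinitely many $n$, where $G_n\subset\Gamma^n$ is the set of good words, so that $E_i\subset\bigcup_{n\ge n_0}\bigcup_{w\in G_n}[w]$ for every $n_0$; this multi-scale cover has $\mathcal{P}_L$-diameters at most $2^{-(n_0-L)}$ and, for any $s>t+\varepsilon$, $s$-sum at most $\sum_{n\ge n_0}2^{Ls}2^{n(t+\varepsilon-s)}\to0$. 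This is precisely why Bowen's Hausdorff-style definition of $m^s_{\mathcal{A}}$ is needed; a fixed-scale count would give only a box-counting bound and would not cover $E_i$. Second, in the type count you should justify why treating the offsets separately suffices: a word is determined, up to $O(N_{\mu_i})$ boundary symbols, by any single offset decomposition, and by concavity of Shannon entropy the average over offsets $s$ of $H(q_s(w))$ is bounded by $H$ of the averaged (overlapping) distribution, which is at most $H(\mu_i,\mathcal{P}_{N_{\mu_i}})+\eta(\delta_{\mu_i})$ by continuity of $H$ on the simplex; hence some offset has small type entropy and its multinomial count controls the total. With these two points filled in, the argument is complete and matches the cited source.
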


For invariant measures on a symbolic space associated to an IFS in which the amount of overlaps is controlled, the dimension of the natural projection is closely connected to the entropy of the measure in the symbolic space.
\begin{lemma}\label{entropy dimension}\label{lemma-8}
Suppose that $\mathcal{G}=\{g_i(x)=rx+t_i\}_{i\in \Gamma}$ is a homogeneous IFS on $\R$ with attractor $E$, and that there exists an $M\in \N$ such that for each $i\in \Gamma$,
$$\left|\{j\in \Gamma:g_i(\emph{ch}(E))\cap g_j(\emph{ch}(E))\neq \emptyset\}\right|\le M,$$
where $\emph{ch}(E)$ denotes the convex hull of $E$. Then for each $\nu\in \cM_{\rm inv}(\Gamma^\N,\sigma)$, if $\pi$ denotes the natural projection from $\Gamma^\N$ to $E$, we have
$$\dim \pi\nu \ge \frac{h(\nu,\sigma)}{-\log r}-\frac{\log M}{-\log r}.
$$
\end{lemma}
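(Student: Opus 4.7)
The plan is to compare, at each level $k\in\N$, the cylinder entropy $H(\nu,\mathcal{C}_k)$ (where $\mathcal{C}_k$ denotes the partition of $\Gamma^\N$ into level-$k$ cylinders) with the entropy $H_n(\pi\nu)$ of the projection at a dyadic scale $2^{-n}\approx r^k|I|$, with $I:=\mathrm{ch}(E)$. The loss in this comparison will be at most $k\log M+O(1)$, and since $\nu$ is invariant, the subadditivity used in the paper's definition of entropy gives $H(\nu,\mathcal{C}_k)\ge k\,h(\nu,\sigma)$. Dividing by $n=k\log(1/r)+O(1)$ and letting $k\to\infty$ then gives the claimed bound on $\dim\pi\nu$.

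The key geometric input is the multiplicity bound
\[
|\mathcal{S}_k(x)|\le M^k,\qquad \mathcal{S}_k(x):=\{\iii\in\Gamma^k:\ x\in g_\iii(I)\},
\]
valid for every $x\in\R$ and every $k\ge 1$, proved by induction on $k$: the case $k=1$ is the hypothesis, and since $g_i(E)\subset E\subset I$ forces $g_i(I)=\mathrm{ch}(g_i(E))\subset I$, any $\iii=(i_1,\ldots,i_k)\in\mathcal{S}_k(x)$ satisfies $g_\iii(I)\subset g_{i_1}(I)$, hence $i_1\in\mathcal{S}_1(x)$ and $(i_2,\ldots,i_k)\in\mathcal{S}_{k-1}(g_{i_1}^{-1}(x))$. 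From this I would deduce the combinatorial step at the heart of the argument: for any interval $P\subset\R$ of length $r^k|I|$,
\[
\bigl|\{\iii\in\Gamma^k:\ g_\iii(I)\cap P\neq\emptyset\}\bigr|\le 2M^k.
\]
Indeed, each $g_\iii(I)$ is a closed interval of length $|P|$, so if it meets $P$ it must contain at least one of the two endpoints of $P$; applying the multiplicity bound separately at each endpoint yields the stated estimate. This is the step I expect to require the most care; once it is in place, the remaining entropy manipulations are standard.

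To conclude, let $\mathcal{P}_k$ denote the partition of $\R$ into half-open intervals of length $r^k|I|$ and set $\mathcal{P}_k^\ast:=\pi^{-1}\mathcal{P}_k$. Since $\pi[\iii]\subset g_\iii(I)$, each atom of $\mathcal{P}_k^\ast$ meets at most $2M^k$ cylinders $[\iii]$ by the previous step, so $H(\nu,\mathcal{C}_k\mid\mathcal{P}_k^\ast)\le k\log M+\log 2$. Combining this with \eqref{eq-entropy-refinement} in the form
\[
H(\nu,\mathcal{C}_k)\le H(\nu,\mathcal{C}_k\vee\mathcal{P}_k^\ast)=H(\nu,\mathcal{P}_k^\ast)+H(\nu,\mathcal{C}_k\mid\mathcal{P}_k^\ast)
\]
and with $H(\nu,\mathcal{P}_k^\ast)=H(\pi\nu,\mathcal{P}_k)$ gives $H(\pi\nu,\mathcal{P}_k)\ge k\,h(\nu,\sigma)-k\log M-\log 2$. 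Picking $n$ with $r^k|I|\le 2^{-n}<2r^k|I|$, Lemma \ref{lemma almost continuity of entropy} yields $|H_n(\pi\nu)-H(\pi\nu,\mathcal{P}_k)|\le\log 3$; dividing by $n=k\log(1/r)+O(1)$ and passing to the liminf as $k\to\infty$ gives $\dim\pi\nu\ge\frac{h(\nu,\sigma)}{-\log r}-\frac{\log M}{-\log r}$.
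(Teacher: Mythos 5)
Your proof is correct and follows essentially the same route as the paper's: the endpoint/multiplicity bound $2M^k$ for the number of level-$k$ intervals meeting an interval of comparable length, the entropy comparison between the cylinder partition and the pulled-back interval partition via (the one-sided form of) Lemma \ref{lemma almost continuity of entropy}, and the lower bound $H(\nu,\mathcal{C}_k)\ge k\,h(\nu,\sigma)$ from the infimum characterization of entropy. The only point worth adding is that you obtain the estimate only along the subsequence $n_k\approx k\log(1/r)$, whereas $\dim\pi\nu$ is a liminf over all $n$; this is harmless because $H_n(\pi\nu)$ is nondecreasing in $n$ and the gaps $n_{k+1}-n_k$ are bounded, so the liminf along your subsequence controls the full liminf.
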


\begin{remark}
Note that in the situation when there are no overlaps between the cylinders $g_i(E)$'s (for instance, if $E$ satisfies the strong separation condition), we have  $\dim \pi\nu= \frac{h(\nu,\sigma)}{-\log r}$.
\end{remark}
\begin{proof}
Without loss of generality, we may assume that $\diam(E)=1$.  If $h = h_n = \lfloor -n(\log r)^{-1} \rfloor$ is the integer for which $r^{h+1} < 2^{-n} \le r^h$, then for any $Q\in D_n(\R)$, we have
$$
\left| \lbrace \jjj \in \Gamma^h:\ g_\jjj(\text{ch}(E)) \cap Q \neq \emptyset \rbrace \right| \leq 2 M^h.
$$
Indeed, each interval $g_\jjj(\text{ch}(E))$, $\jjj \in \Gamma^h$ which intersects $Q$ contains at least one of its endpoints, and a given endpoint can intersect at most $M^h$ of the intervals $g_\jjj(\text{ch}(E))$, $\jjj \in \Gamma^h$. Note also that each interval $g_\jjj(\text{ch}(E))$ can only intersect at most $2^n r^h + 1 < 1 + r^{-1}$ dyadic intervals $Q \in D_n(\R)$, when $\jjj \in \Gamma^h$.

Let now $\mathcal{E}_h$ denote the partition $\lbrace [\iii]:\ \iii \in \Gamma^h \rbrace$ of $\Gamma^\N$. By the above, for large $n$, each element of the partition $\mathcal{E}_h$ intersects at most $2M^h$ elements of the partition $\pi^{-1}(D_n(\R)) = \lbrace \pi^{-1}(Q):\ Q \in D_n(\R) \rbrace$ and vice versa. Since $H_n(\pi\nu) = H(\nu, \pi^{-1}(D_n(\R)))$, we have by Lemma \ref{lemma almost continuity of entropy} for every large $n$ that
$$
|H_n(\pi\nu) - H(\nu, \mathcal{E}_h)| \leq h \log M + 1,
$$
and so
\begin{align*}
\dim \pi\nu
&\geq -(\log r)^{-1}\liminf_{n \to \infty} \left(\frac{1}{-n (\log r)^{-1}}H(\nu, \mathcal{E}_h) - \frac{h \log M}{-n(\log r)^{-1}} \right) \\
& = \frac{h(\nu, \sigma)}{-\log r} - \frac{\log M}{-\log r}\,,
\end{align*}
as required.
\end{proof}

\section{Invariant measures and the weak separation condition}\label{sec:ws}

In this section, we discuss some properties of projections of invariant measures for IFS's on the line satisfying the weak separation. We recall the definition only in the special case most relevant to our application.

\begin{definition}
Let $\mathcal{F}=\{f_i(x)=rx+\lambda_i\}_{i\in \Gamma}$ be a homogeneous affine IFS on $\R$. We say $\mathcal{F}$ satisfies the {\em weak separation condition} (WSC) if there is $c>0$ such that for any $n\ge 1$ and $\iii,\jjj\in \Gamma^n$,
$$|f_\iii(0) - f_\jjj(0)| =0 \ \textrm{ or } \ |f_\iii(0) - f_\jjj(0)|> cr^n.$$
\end{definition}
An important consequence of the WSC is that for any $a\in \R$ we have
$$\left\{[a,a+r^n]\cap \{f_\iii(0):\iii\in \Gamma^n\}\right\}\le M_1,\ \ n\in \N$$
for some constant $M_1$ depending only on $\cF$.

The following proposition on the absolute continuity of the projections of shift-invariant measures will play a crucial role in the proof of Theorem \ref{thm-principal-1}.

\begin{proposition}\label{prop-wsc-abs-cont}
Let $\mathcal{F}=\{f_i(x)=r x+\lambda_i\}_{i\in\Gamma}$ be a homogeneous affine IFS on $\R$ satisfying the WSC. Let $\nu$ be a shift-invariant measure on $\Gamma^\N$. Then
$$\pi\nu \ll \mathcal{L}^1\iff \dim \pi \nu=1,$$
where $\pi$ denotes the natural coding map from $\Gamma^\N$ to the attractor of $\mathcal{F}$.
\end{proposition}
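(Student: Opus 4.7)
The direction $\pi\nu \ll \mathcal{L}^1 \Rightarrow \dim \pi\nu = 1$ is standard: truncating the density on a set of large $\pi\nu$-mass (where it is bounded, by a standard measure-theoretic argument) and using the concavity of $H$ under convex combinations gives $H_n(\pi\nu)/n \to 1$.

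For the harder direction, my strategy is to first establish the dimension formula
\[
\dim \pi\nu \;=\; \frac{h(\nu,\sigma)}{-\log r}
\]
valid under WSC, so that $\dim \pi\nu = 1$ becomes the entropy saturation $h(\nu,\sigma) = -\log r$. The upper bound is independent of WSC: since cylinders of length $h_n := \lceil n/(-\log r)\rceil$ have image of diameter at most $2^{-n}$, each meets at most two cells of $D_n(\R)$, so $H_n(\pi\nu) \leq H(\nu,\mathcal{E}_{h_n})+1$ by \eqref{eq-entropy-refinement}; dividing by $n$ and using $H(\nu,\mathcal{E}_h)/h \to h(\nu,\sigma)$ yields the bound. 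The lower bound comes from Lemma \ref{entropy dimension} applied to the $k$-th iterate $\cF^k$, provided the relevant overlap constant $M_k$ can be controlled---the natural remedy is to refine the argument of Lemma \ref{entropy dimension} so that the relevant partition is not $\mathcal{E}_h$ but its coarsening by the distinct endpoints $\{f_\iii(0)\}_{\iii \in \Gamma^h}$. Under WSC this incurs only an $O(\log M_1)$ error at each scale independent of $h$, and letting $k \to \infty$ in the refined bound gives the matching inequality.

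Once the identity is in place, the hypothesis $\dim \pi\nu = 1$ becomes $h(\nu,\sigma) = -\log r$. My plan is then to show that for $\pi\nu$-a.e.\ $x$ one has $\limsup_{\rho \to 0^+} \pi\nu(B(x,\rho))/\rho < \infty$, whereupon the Lebesgue differentiation theorem forces $\pi\nu \ll \mathcal{L}^1$. For this, the Shannon--McMillan--Breiman theorem applied to the ergodic components of $\nu$ gives $\nu[\iii|_n] = r^n \cdot 2^{o(n)}$ for $\nu$-a.e.\ $\iii$, and WSC controls the number of distinct endpoints $f_\jjj(0)$, $\jjj \in \Gamma^n$, in any interval of size $r^n$ by $M_1$. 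Together these should yield $\pi\nu(B(x,r^n)) = O(r^n)$ at $\pi\nu$-typical points.

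The main obstacle I foresee is the \emph{multiplicity} $|\{\jjj\in\Gamma^n : f_\jjj(0) = e\}|$ of each distinct endpoint $e$: in the overlapping regime this may grow exponentially in $n$, whereas WSC only bounds the number of distinct endpoints. Resolving this is the technical heart of the argument, and I would attempt it by passing to the quotient IFS under the equivalence $\iii \sim \jjj \iff f_\iii = f_\jjj$, which under WSC has only polynomially many equivalence classes and enjoys strong separation. The shift-invariance of $\nu$ combined with the entropy saturation $h(\nu,\sigma) = -\log r$ should then force the induced measure on the quotient to be effectively uniform at every scale, yielding the desired pointwise density bound on $\pi\nu$ and hence absolute continuity.
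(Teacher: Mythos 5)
Your overall strategy---reduce $\dim\pi\nu=1$ to an entropy saturation statement and exploit the WSC through the identification of words with the same endpoint $f_\iii(0)$---is close in spirit to the paper's, but two steps do not survive scrutiny. First, the dimension formula $\dim \pi\nu = h(\nu,\sigma)/(-\log r)$ is \emph{false} under the WSC: exact overlaps inflate the symbolic entropy without affecting the projection. Take $r=1/2$, $\Gamma=\{0,1,2\}$ with $\lambda_0=\lambda_1=0$ and $\lambda_2=1/2$, and let $\nu$ be the uniform Bernoulli measure; then $h(\nu,\sigma)=\log 3>\log 2=-\log r$, while $\pi\nu$ is the $(2/3,1/3)$ self-similar measure on $[0,1]$, whose dimension is $\log 3-\tfrac23<1$. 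Only your upper bound is correct. What your ``coarsening by distinct endpoints'' is pointing toward is the correct replacement: with $\mathcal{P}_h$ the partition of $\Gamma^\N$ into the sets $[B_h(a)]=\bigcup\{[\iii]:\iii\in\Gamma^h,\ f_\iii(0)=a\}$, the WSC gives $|H_n(\pi\nu)-H(\nu,\mathcal{P}_h)|\le M_2$ at matched scales, and in the overlapping regime $H(\nu,\mathcal{P}_h)$ is genuinely smaller than $H(\nu,\mathcal{E}_h)$. So the hypothesis $\dim\pi\nu=1$ must be converted into the statement $H_n(\pi\nu)\ge n-C$ for all $n$, not into $h(\nu,\sigma)=-\log r$.

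Second, your endgame---Shannon--McMillan--Breiman plus a pointwise density bound $\pi\nu(B(x,\rho))=O(\rho)$---is exactly where the multiplicity problem you flag bites, and the proposed fix is not viable as described: under WSC the number of equivalence classes $\{f_\iii:\iii\in\Gamma^n\}$ is of order $r^{-n}$ (exponential, not polynomial), the quotient is not an IFS in any sense that yields strong separation, and ``effectively uniform at every scale'' does not follow from entropy saturation, which is an averaged statement. The paper avoids pointwise densities entirely, via two soft ingredients. (i) The partition $\mathcal{P}_n\vee\sigma^{-n}\mathcal{P}_m$ refines $\mathcal{P}_{n+m}$ (since $f_{\iii|_n}(0)$ and $f_{\sigma^n\iii}(0)$ determine $f_\iii(0)$), which together with shift-invariance gives $H(\nu,\mathcal{P}_{n+m})\le H(\nu,\mathcal{P}_n)+H(\nu,\mathcal{P}_m)$ and hence, by the WSC comparison above, the approximate subadditivity $H_{n+m}(\pi\nu)\le H_n(\pi\nu)+H_m(\pi\nu)+C$; iterating and letting the number of iterations tend to infinity upgrades $\liminf_n H_n(\pi\nu)/n=1$ to $H_n(\pi\nu)\ge n-C$ for \emph{every} $n$. (ii) Garsia's entropy criterion: any probability measure on $[0,1]$ with $H_n(\mu)\ge n-M$ for all $n$ is absolutely continuous; this is a two-line convexity argument, since a set of positive measure and zero Lebesgue measure could be covered by $\varepsilon 2^n$ dyadic intervals, forcing an entropy deficit of order $\log(1/\varepsilon)$. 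Replacing your dimension formula by (i) and your SMB step by (ii) turns your outline into the paper's proof. Your argument for the easy direction is fine.
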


Special cases of this result are known. Ruiz \cite{Ruiz} proved it (and several additional properties)  in the case where $r^{-1}$ is an integer, the translations $\lambda_i$ are rational, and $\nu$ is a Bernoulli measure. Although he didn't state it in this language, in the special case of Bernoulli convolutions with Pisot parameter, the proposition goes back to Garsia in the 1960s \cite{Garsia}. Our approach is similar to Garsia's, but we emphasize that we require information about every shift-invariant measure on the symbolic space.

Proposition \ref{prop-wsc-abs-cont} is a consequence of the following two lemmas.

\begin{lemma}\label{lemma sub additivity entropy WSC}
Under the assumptions of Proposition \ref{prop-wsc-abs-cont}, there exists a constant $C$ depending only on the IFS $\cF$ such that, denoting $\mu=\pi\nu$,
$$H_{n+m}(\mu)\le H_{n}(\mu)+H_{m}(\mu)+C.$$
\end{lemma}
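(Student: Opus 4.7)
The plan is to transport the entropy computation from $\R$ to the symbolic space $\Gamma^\N$, where the shift-invariance of $\nu$ is directly usable, and then to exploit the semigroup identity $f_{\iii|_{\ell+\ell'}}=f_{\iii|_\ell}\circ f_{\sigma^\ell\iii|_{\ell'}}$. For each $\ell\ge 1$, I would introduce the partition $\mathcal{F}_\ell$ of $\Gamma^\N$ whose cells are $\{\iii:f_{\iii|_\ell}=f\}$, with $f$ ranging over the distinct compositions $\mathcal{A}_\ell=\{f_\iii:\iii\in\Gamma^\ell\}$. Let $h_n$ be the integer with $r^{h_n+1}<2^{-n}\le r^{h_n}$. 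Then $\pi$ sends each cell of $\mathcal{F}_{h_n}$ into an interval of length $\lesssim 2^{-n}$, while the WSC guarantees that at most boundedly many elements of $\mathcal{A}_{h_n}$ can have their pieces $f(K)$ meet a fixed dyadic interval of length $2^{-n}$. Comparing $\mathcal{F}_{h_n}$ with $\pi^{-1}D_n(\R)$ via Lemma \ref{lemma almost continuity of entropy} should then yield
\[
\bigl|H(\nu,\mathcal{F}_{h_n})-H_n(\mu)\bigr|\le C_0,
\]
where $C_0$ depends only on $\mathcal{F}$.

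The heart of the argument is that the ordered pair $(f_{\iii|_{h_n}},f_{\sigma^{h_n}\iii|_{h_m}})$ determines $f_{\iii|_{h_n+h_m}}$ by composition, so $\mathcal{F}_{h_n+h_m}$ is coarser than $\mathcal{F}_{h_n}\vee\sigma^{-h_n}\mathcal{F}_{h_m}$. Combining the refinement inequality \eqref{eq-entropy-refinement} with the shift-invariance identity $H(\nu,\sigma^{-h_n}\mathcal{F}_{h_m})=H(\sigma^{h_n}\nu,\mathcal{F}_{h_m})=H(\nu,\mathcal{F}_{h_m})$ gives
\[
H(\nu,\mathcal{F}_{h_n+h_m})\le H(\nu,\mathcal{F}_{h_n})+H(\nu,\mathcal{F}_{h_m}).
\]
Since the defining inequalities force $|h_{n+m}-(h_n+h_m)|\le 1$, the partitions $\mathcal{F}_{h_{n+m}}$ and $\mathcal{F}_{h_n+h_m}$ differ by at most one generation of the IFS; the finite branching of $\mathcal{F}$ together with the WSC shows each cell of one meets only boundedly many cells of the other, and another appeal to Lemma \ref{lemma almost continuity of entropy} gives $|H(\nu,\mathcal{F}_{h_{n+m}})-H(\nu,\mathcal{F}_{h_n+h_m})|\le C_1$. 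Concatenating with the first step delivers the desired bound $H_{n+m}(\mu)\le H_n(\mu)+H_m(\mu)+C$.

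The main obstacle I expect is the first step, namely $H(\nu,\mathcal{F}_{h_n})=H_n(\mu)+O(1)$. The naive cylinder partition $\{[\iii]:\iii\in\Gamma^{h_n}\}$ is typically too fine for such an identification, because exact overlaps among the maps $f_\iii$ can force its entropy to exceed $H_n(\mu)$ by an amount linear in $h_n$. Collapsing cylinders along the equivalence $\iii\sim\jjj\iff f_\iii=f_\jjj$ to form $\mathcal{F}_{h_n}$ is precisely the device that removes this overcount, and the WSC is what guarantees that the resulting partition is combinatorially comparable, at scale $r^{h_n}\asymp 2^{-n}$, to the Euclidean dyadic partition on the projection. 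Once this identification is in place, the semigroup structure of the IFS and the shift-invariance of $\nu$ render the remaining subadditivity essentially formal.
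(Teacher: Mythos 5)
Your proposal is correct and follows essentially the same route as the paper: your partitions $\mathcal{F}_\ell$ (cylinders grouped by the composed map $f_{\iii|_\ell}$) coincide, for a homogeneous affine IFS, with the paper's partitions $\mathcal{P}_\ell$ grouped by the translation part $f_\iii(0)$, the subadditivity via the refinement $\mathcal{P}_n\vee\sigma^{-n}\mathcal{P}_m\succeq\mathcal{P}_{n+m}$ plus shift-invariance is the paper's argument verbatim, and the WSC-based comparison with $\pi^{-1}D_n(\R)$ through Lemma \ref{lemma almost continuity of entropy} is likewise identical. The only cosmetic difference is that the paper proves subadditivity of $H(\nu,\mathcal{P}_\ell)$ for all levels and converts to dyadic scales once at the end, whereas you work at the levels $h_n$ and absorb the discrepancy $|h_{n+m}-(h_n+h_m)|\le 1$ with one extra (harmless) application of the same continuity lemma.
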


\begin{lemma}\label{lemma quantitative entropy estimate implies abs cont}
Let $\mu$ be a probability measure on $[0,1]$ such that for some constant $M$,
\begin{equation}\label{eq proposition quantitative entropy estimate 1}
H_n(\mu)\ge n-M \ \ {\rm for \ all\ } \ n.
\end{equation}
Then $\mu\ll \mathcal{L}^1$.
\end{lemma}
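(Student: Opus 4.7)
The plan is to translate the entropy hypothesis into a uniform $L\log L$ bound on the dyadic densities of $\mu$, and then invoke the dyadic martingale convergence theorem to extract a Lebesgue density for $\mu$.

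First, for each $n$ I would introduce the dyadic density $f_n\colon[0,1]\to[0,\infty)$ defined by $f_n(x)=2^n\mu(I_n(x))$, where $I_n(x)$ denotes the unique interval in $D_n(\R)$ containing $x$. This sequence is a non-negative martingale with respect to $\mathcal{L}^1$ and the dyadic filtration, of total mass $1$. Grouping the sum that defines $H_n(\mu)$ by dyadic cells (on each of which $f_n$ is constant), a short computation gives
$$\int_0^1 f_n\log f_n\,d\mathcal{L}^1 \;=\; n - H_n(\mu)\,,$$
so the hypothesis \eqref{eq proposition quantitative entropy estimate 1} furnishes the uniform bound $\int_0^1 f_n\log f_n\,d\mathcal{L}^1 \leq M$ for every $n$.

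Next, since $t\log t$ is bounded below on $[0,\infty)$, this immediately upgrades to a uniform bound on $\int_0^1 f_n\log^+ f_n\,d\mathcal{L}^1$. By the de la Vall\'ee Poussin criterion, applied to the superlinear test function $\Phi(t)=t\log^+t$, the family $(f_n)$ is uniformly integrable with respect to $\mathcal{L}^1$. Combined with the dyadic martingale property, this forces $f_n\to f$ both almost everywhere and in $L^1(\mathcal{L}^1)$ for some non-negative $f\in L^1(\mathcal{L}^1)$.

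It remains to identify $f$ as the density of $\mu$. For each dyadic interval $I\in D_m(\R)$ and each $n\geq m$, the very definition of $f_n$ gives $\int_I f_n\,d\mathcal{L}^1=\mu(I)$, and passing to the $L^1$ limit yields $\int_I f\,d\mathcal{L}^1=\mu(I)$. A standard monotone-class argument then extends this equality to every Borel set, giving $\mu=f\cdot\mathcal{L}^1$ and hence $\mu\ll\mathcal{L}^1$. The only conceptual step in the whole argument is the first one: recognising the entropy hypothesis as an $L\log L$ bound on the density martingale. After that the proof is driven entirely by classical measure-theoretic machinery, and I expect no genuine obstacle.
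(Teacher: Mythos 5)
Your argument is correct, but it takes a genuinely different route from the paper's. The paper proves the contrapositive by a covering argument: if $\mu\not\ll\mathcal{L}^1$, it fixes a compact set $E$ with $\mu(E)=C>0$ and $\mathcal{L}^1(E)=0$, covers it by a union $F$ of fewer than $\varepsilon 2^n$ dyadic intervals of level $n$, and applies the concavity bound \eqref{eq-entropy-convexity} to the splitting $\mu=\mu(F)\mu_F+(1-\mu(F))\mu_{[0,1]\setminus F}$ to obtain $H_n(\mu)\le C\log\varepsilon+n+1$, contradicting \eqref{eq proposition quantitative entropy estimate 1} once $\varepsilon$ is small. You instead argue directly: your identity $\int_0^1 f_n\log f_n\,d\mathcal{L}^1=n-H_n(\mu)$ is correct (summing $\mu(Q)\bigl(n+\log\mu(Q)\bigr)$ over $Q\in D_n$), the passage from $f_n\log f_n$ to $f_n\log^+ f_n$ costs only the uniform lower bound of $t\log t$, and de la Vall\'ee Poussin plus uniformly integrable martingale convergence (with half-open dyadic intervals, so the levels genuinely nest and $f_n$ is an exact martingale even in the presence of atoms) legitimately produce a density. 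Your route is heavier on classical machinery but yields strictly more: by Fatou the limit density satisfies $\int f\log^+f<\infty$, i.e.\ it lies in the Orlicz class $L\log L$, which is a quantitative strengthening of absolute continuity in the spirit of Garsia's original work. The paper's argument buys elementarity --- it uses nothing beyond \eqref{eq-entropy-convexity} --- and transfers with no extra effort to the Ahlfors-regular variant sketched after Proposition \ref{prop max dim implies abs cont}, where the reference measure is $\mathcal{H}^s$ on a generalized dyadic structure; your martingale argument also adapts there, but requires setting up the corresponding density martingale with respect to a doubling reference measure.
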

In fact, this lemma is due to Garsia \cite{Garsia}, but below we present the short proof for completeness.

We first show how to conclude the proof of Proposition \ref{prop-wsc-abs-cont}.
\begin{proof}[Proof of Proposition \ref{prop-wsc-abs-cont}]
Let $\nu\in \mathcal{M}_{\rm inv}(\Gamma^\N,\sigma)$. Applying Lemma \ref{lemma sub additivity entropy WSC} $k$ times, we have 
$$H_n(\pi\nu)\ge n\frac{H_{nk}(\pi\nu)}{nk} -C \ \ {\rm for \ all\ } \ n.$$
If  $\dim(\pi\nu)=1$, then $\frac1{nk}H_{nk}(\pi\mu)\longrightarrow 1$ as $k\to\infty$, and so Lemma \ref{lemma quantitative entropy estimate implies abs cont} yields that $\pi\nu$ is absolutely continuous.
\end{proof}

\begin{proof}[Proof of Lemma \ref{lemma sub additivity entropy WSC}]
Again, we may assume that $\diam(E)=1$. Define the sets
$$
\mathcal{A}_n = \lbrace f_\iii(0):\ \iii \in \Gamma^n \rbrace
\ \ {\rm and}\ \
B_n(a) = \lbrace \iii \in \Gamma^n:\ f_\iii(0) = a \rbrace.
$$
Given a collection of finite sequences $\mathcal{I}$, we denote $[\mathcal{I}]=\bigcup_{\iii\in\mathcal{I}}[\iii]$. In particular,
\[
[B_n(a)] = \bigcup\{[\iii]: \iii\in\Gamma^n, f_\iii(0)=a\}.
\]
We define the partitions $\mathcal{P}_n=\{ [B_n(a)]: a\in \mathcal{A}_n\}$. We proceed by first showing that
\begin{equation}\label{eq Garsia entropy subadditive}
H(\nu, \mathcal{P}_{n+m}) \leq H(\nu, \mathcal{P}_n) + H(\nu, \mathcal{P}_m)
\end{equation}
for all integers $n$ and $m$, and then showing that if $h = \lfloor -n (\log r)^{-1} \rfloor$ is the integer for which $r^{h+1}< 2^{-n}\le r^{h}$, then
\begin{equation}\label{eq entropy wsc bound 1}
|H_n(\mu) - H(\nu, \mathcal{P}_h)| \leq M_2
\end{equation}
for some constant $M_2$ independent of $n$. Lemma \ref{lemma sub additivity entropy WSC} then follows by combining \eqref{eq Garsia entropy subadditive} and \eqref{eq entropy wsc bound 1}.

We begin with \eqref{eq Garsia entropy subadditive}.  Note that the partition $\mathcal{P}_n \vee \sigma^{-n}\mathcal{P}_m$ refines $\mathcal{P}_{n+m}$. Indeed, if $\iii \in \Gamma^{n+m}$, then $f_{\iii|_n}(0)$ and $f_{\sigma^n\iii}(0)$ determine $f_{\iii}(0)$. Hence
\begin{align*}
H(\nu, \mathcal{P}_{n+m}) &\le H(\nu,\mathcal{P}_n \vee \sigma^{-n}\mathcal{P}_m) \\
&\overset{\eqref{eq-entropy-refinement}}{\le} H(\nu,\mathcal{P}_n)+H(\nu,\sigma^{-n}\mathcal{P}_m)\\
&\overset{\sigma\nu=\nu}{=} H(\nu,\mathcal{P}_n)+H(\nu,\mathcal{P}_m).
\end{align*}

To prove \eqref{eq entropy wsc bound 1}, fix an integer $n$ and let $h = \lfloor -n (\log r)^{-1} \rfloor$. We now claim that as a consequence of the WSC, there exists a constant $M_2$ depending only on the IFS $\mathcal{F}$ such that each element of the partition $\mathcal{P}_h$ intersects at most $M_2$ elements of the partition
$\pi^{-1}(D_n(\R))$, 
and vice versa.

Indeed, for any $\iii \in \Gamma^\N$ the set $\pi[(\iii|_h])$ has diameter $r^h$ and can thus intersect at most $2^n r^h+1 < r^{-1}+1$ dyadic intervals of level $n$. This shows that each element in $\mathcal{P}_h$ can intersect at most $r^{-1}+1$ elements of $\pi^{-1}(D_n(\R))$.
On the other hand, recall that $M_1$ was chosen so that
$$
\lbrace [a, a+ r^n] \cap \lbrace f_\iii(0):\ \iii \in \Gamma^n \rbrace \rbrace \leq M_1
$$
for all $a \in \R$ and $n \in \N$. Now, if $Q \in D_n(\R)$ and $\iii \in \Gamma^\N$ is such that $\pi(\iii) \in Q$, the point $f_{\iii|_h}(0)$ must be within distance $r^h$ of $Q$. Since the number of level-$n$ dyadic intervals within this distance of $Q$ is at most $2^{n}r^h + 1 < r^{-1} + 1$, and each of them can contain $M_1$ distinct points $f_\iii(0)$, $\iii \in \Gamma^h$, we know that any set in $\pi^{-1}(D_n(\R))$ can intersect at most $M_1(1+ r^{-1})$ cylinders of $\mathcal{P}_h$.

Thus, we may choose $M_2 = M_1(1+r^{-1})$, and \eqref{eq entropy wsc bound 1} follows by using Lemma \ref{lemma almost continuity of entropy} and the equality $H_n(\mu) = H(\nu, \pi^{-1}(D_n(\R)))$.
\end{proof}

\begin{proof}[Proof of Lemma \ref{lemma quantitative entropy estimate implies abs cont}]
Suppose that $\mu \not\ll \mathcal{L}^1$ and let $E$ be a set, which we may assume compact, such that $0 < \mu(E) := C$ and $\mathcal{L}^1(E) = 0$. Then for every $\varepsilon > 0$ there exist $n$ and a set $F\supset E$ which is a union of $<\varepsilon 2^n$ dyadic intervals of length $2^{-n}$. We have
\begin{align*}
H_n(\mu) &= H_n(\mu(F)\mu_F+(1-\mu(F))\mu_{[0,1]\setminus F}) \\
&\overset{\eqref{eq-entropy-convexity}}{\le}  \mu(F)H_n(\mu_F)+ (1-\mu(F)) H_n(\mu_{[0,1]\setminus F}) +1\\
&\le \mu(F) \log(\varepsilon 2^n) + (1-\mu(F))n + 1\\
&\le C\log(\varepsilon) + n + 1.
\end{align*}
Since $\varepsilon$ is arbitrarily small, \eqref{eq proposition quantitative entropy estimate 1} cannot hold, as claimed.
\end{proof}

Although this is not needed for the proof of Theorem \ref{thm-principal-1}, we remark that our proof also establishes the following generalization of Proposition \ref{prop-wsc-abs-cont}:

\begin{proposition}\label{prop max dim implies abs cont}
Let $\mathcal{F} = \lbrace f_i \rbrace_{i \in \Gamma}$ be a homogeneous affine IFS on $\R$ satisfying the WSC, and let $K$ denote its attractor. Let $\nu\in \mathcal{M}_{\rm inv}(\Gamma^\N,\sigma)$ and $\mu = \pi \nu$, where $\pi$ denotes the natural coding map from $\Gamma^\N$ to $K$. If $\dim \mu= \dim K = s$, then $\mu$ is absolutely continuous with respect to the $s$-dimensional Hausdorff measure.
\end{proposition}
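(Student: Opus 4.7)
The strategy is to repeat the proof of Proposition \ref{prop-wsc-abs-cont} with the value $1$ replaced throughout by $s = \dim K$. Both of its ingredients---the subadditive entropy estimate of Lemma \ref{lemma sub additivity entropy WSC} and the Garsia-type criterion of Lemma \ref{lemma quantitative entropy estimate implies abs cont}---should admit the required generalizations.

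First I would derive the quantitative lower bound $H_n(\mu) \ge sn - C$ for every $n$. Lemma \ref{lemma sub additivity entropy WSC} shows that $a_n := H_n(\mu) + C$ is subadditive, so by Fekete's lemma $\lim_n H_n(\mu)/n$ exists and equals $\inf_n a_n/n$. Since by hypothesis $\dim \mu = \liminf_n H_n(\mu)/n = s$, this infimum is $s$, which immediately yields $H_n(\mu) \ge sn - C$ for all $n$.

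Next I would upgrade Lemma \ref{lemma quantitative entropy estimate implies abs cont} to the assertion: any probability measure $\mu$ supported on $K$ with $H_n(\mu) \ge sn - C$ for all $n$ satisfies $\mu \ll \mathcal{H}^s$. Assume for contradiction there is a compact $E \subset K$ with $\mu(E) = C_0 > 0$ and $\mathcal{H}^s(E) = 0$. Given $\delta > 0$, cover $E$ by dyadic intervals $I_j$ of lengths $2^{-n_j}$ with $\sum_j 2^{-sn_j} < \delta$, pick $n \ge \max_j n_j$, and let $F \supset E$ be the union of those level-$n$ dyadic subintervals of the $I_j$ that meet $K$. The crucial ingredient is an upper Ahlfors-type estimate
\[
\#\{Q \in D_n(\R):\ Q \subset J,\ Q \cap K \ne \emptyset\} \le C_1 \cdot 2^{s(n-k)}
\]
for every dyadic interval $J$ of length $2^{-k}$. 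This follows from the WSC together with $\dim_B K = s$: for $h$ with $r^h \asymp 2^{-k}$, the WSC forces $J$ to meet only boundedly many distinct cylinders $K_\iii$, $\iii \in \Gamma^h$, and each is a scaled copy of $K$ covered by $\lesssim 2^{s(n-k)}$ intervals of $D_n(\R)$. Summing over $j$, the set $F$ is a union of at most $C_1 \delta \cdot 2^{sn}$ level-$n$ intervals, while $\mu(F) \ge C_0$. Decomposing $\mu = \mu(F)\mu_F + \mu(F^c)\mu_{F^c}$ and applying \eqref{eq-entropy-convexity} (together with the bound $C_2 \cdot 2^{sn}$ for the total number of level-$n$ dyadic intervals meeting $K$) yields
\[
H_n(\mu) \le 1 + \mu(F)\bigl(sn + \log(C_1 \delta)\bigr) + \mu(F^c)\bigl(sn + \log C_2\bigr) \le sn + C_0 \log \delta + O(1),
\]
which contradicts $H_n(\mu) \ge sn - C$ once $\delta$ is chosen sufficiently small. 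The main obstacle is establishing the upper Ahlfors estimate for $K$; once this is in hand, everything else is a direct transcription of the Garsia-style convexity computation.
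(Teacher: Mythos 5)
Your proposal follows essentially the same route as the paper's (sketched) argument: combine the subadditivity of $n\mapsto H_n(\mu)$ from Lemma \ref{lemma sub additivity entropy WSC} with a Garsia-type criterion adapted to exponent $s$, the whole thing resting on the upper Ahlfors regularity of $K$. The Fekete step giving $H_n(\mu)\ge sn-C$ is correct, and your variant of the Garsia computation (keeping the ordinary dyadic partitions $D_n(\R)$ and counting the level-$n$ intervals that meet $K$, rather than passing to generalized dyadic partitions of $K$ as the paper suggests) works just as well.

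The one point you should not gloss over is the counting estimate itself. You attribute the bound $\#\{Q\in D_n(\R): Q\subset J,\ Q\cap K\neq\emptyset\}\le C_1 2^{s(n-k)}$ to ``the WSC together with $\dim_B K=s$''. The WSC does correctly reduce this to the global bound $N_{2^{-j}}(K)\le C\,2^{sj}$ (boundedly many distinct cylinders $K_\iii$ meet $J$, each a rescaled copy of $K$), but equality of box and Hausdorff dimension only yields $N_{2^{-j}}(K)\le 2^{(s+\varepsilon)j}$, and the $\varepsilon$ is fatal here: it would turn your contradiction into $H_n(\mu)\le (s+\varepsilon)n+C_0\log\delta+O(1)$, which is compatible with $H_n(\mu)\ge sn-C$. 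The uniform-constant version is precisely the statement that WSC attractors are Ahlfors regular, which is the theorem of \cite{FHOR} that the paper cites; it is a genuine external input, not a consequence of $\dim_B K=s$. Once you invoke that result explicitly (it also guarantees $\mathcal{H}^s(K)>0$, so the conclusion is not vacuous), the rest of your argument is a correct transcription of the paper's proof.
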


Indeed, it was shown in \cite{FHOR} that self-similar sets under the WSC are Ahlfors-regular. A small variant of the proof of Lemma \ref{lemma quantitative entropy estimate implies abs cont}, using generalized dyadic partitions of $K$ in place of $\mathcal{D}_n$ (see e.g. \cite{KRS}), shows that $\mu\ll \mathcal{H}^s$ provided $\mu$ is a probability measure on an Ahlfors regular set $E\subset [0,1]$ with dimension $s$ such that for some constant $M$,
\[
H_n(\mu)\ge ns-M \ \ {\rm for \ all\ } \ n.
\]
Moreover, the proof of Lemma \ref{lemma sub additivity entropy WSC} also goes through in this setting.
We note that Ahlfors regularity gets used in the proof of Lemma \ref{lemma quantitative entropy estimate implies abs cont} for the estimate $H_n(\mu_{[0,1]\setminus F})\le sn+C$, as well as to conclude the existence of the number $M_2$ in the proof of Lemma \ref{lemma sub additivity entropy WSC}.

With minor technical additional complications, Proposition \ref{prop max dim implies abs cont} extends to arbitrary self-similar systems on $\R^d$ satisfying the WSC. The details are left to the interested reader.

We also require the following semicontinuity result for the dimension of invariant measures on a set satisfying the WSC. It is a corollary of \cite[Theorem 2.8 and Proposition 4.20]{FH}, but for the reader's convenience we give a short proof for it.

\begin{lemma}\label{wsc dim cont}\label{lemma-5}
Let $\mathcal{F}=\{f_i\}_{i\in \Gamma}$ be a homogeneous affine IFS on $\R$ satisfying the WSC. Then the map
$$\mu\mapsto \dim \pi\mu$$
is upper semi-continuous (with respect to the weak$^*$-convergence) in the set of invariant measures on $\Gamma^{\mathbb{N}}$, where $\pi$ denotes the natural coding map from $\Gamma^\N$ to the attractor of $\mathcal{G}$.
\end{lemma}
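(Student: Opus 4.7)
The plan is to exploit the near-subadditivity of the sequence $H_n(\pi\nu)$ established in Lemma \ref{lemma sub additivity entropy WSC}, combined with the comparison between the dyadic-scale entropy $H_n(\pi\nu)$ and the entropy $H(\nu,\mathcal{P}_h)$ with respect to the symbolic partition $\mathcal{P}_h$ that was already analysed in the proof of that lemma. The key observation is that $\mathcal{P}_h$ is a finite partition of $\Gamma^{\mathbb N}$ into \emph{clopen} sets (each element is a finite union of cylinders of length $h$), so the map $\nu\mapsto H(\nu,\mathcal{P}_h)$ is continuous with respect to weak$^*$-convergence.

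More precisely, first I would apply Fekete's lemma to the sequence $a_n:=H_n(\pi\nu)+C$ (which is genuinely subadditive by Lemma \ref{lemma sub additivity entropy WSC}) to deduce that, for every $\nu\in\mathcal{M}_{\rm inv}(\Gamma^{\mathbb N},\sigma)$,
\[
\dim\pi\nu \;=\; \lim_{n\to\infty}\frac{H_n(\pi\nu)}{n} \;=\; \inf_{n\ge 1}\frac{H_n(\pi\nu)+C}{n}.
\]
In particular, $\dim\pi\nu\le (H_n(\pi\nu)+C)/n$ for every $n$. Fixing $n$ and letting $h=h_n=\lfloor -n(\log r)^{-1}\rfloor$, the estimate \eqref{eq entropy wsc bound 1} from the proof of Lemma \ref{lemma sub additivity entropy WSC} gives
\[
|H_n(\pi\nu)-H(\nu,\mathcal{P}_h)|\le M_2
\]
with $M_2$ depending only on $\mathcal{F}$.

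Now suppose $\nu_k\to\nu$ weakly in $\mathcal{M}_{\rm inv}(\Gamma^{\mathbb N},\sigma)$. Since each element $P\in\mathcal{P}_h$ is clopen, $\nu_k(P)\to\nu(P)$ and therefore, by continuity of $x\mapsto -x\log x$ on $[0,1]$,
\[
H(\nu_k,\mathcal{P}_h)\longrightarrow H(\nu,\mathcal{P}_h) \qquad (k\to\infty).
\]
Combining this with the two-sided bound above yields
\[
\limsup_{k\to\infty} H_n(\pi\nu_k) \;\le\; H(\nu,\mathcal{P}_h)+M_2 \;\le\; H_n(\pi\nu)+2M_2,
\]
and hence
\[
\limsup_{k\to\infty}\dim\pi\nu_k \;\le\; \limsup_{k\to\infty}\frac{H_n(\pi\nu_k)+C}{n} \;\le\; \frac{H_n(\pi\nu)+2M_2+C}{n}
\]
for every $n$. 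Letting $n\to\infty$ the right-hand side tends to $\dim\pi\nu$, which gives the desired upper semi-continuity.

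The only delicate point is the transfer between the dyadic partition $D_n(\R)$ on the real line and the symbolic partition $\mathcal{P}_h$ on $\Gamma^{\mathbb N}$: dyadic-scale entropy is \emph{not} continuous under weak$^*$-convergence (level-$n$ boundaries may carry mass), whereas entropy with respect to a clopen partition always is. The WSC is precisely what allows this transfer with a uniform error $M_2$, and without it one loses control of the bound $h\log M_2$-type error in the standard comparison via Lemma \ref{lemma almost continuity of entropy}, which would not vanish after dividing by $n$.
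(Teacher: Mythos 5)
Your argument is correct, and it differs from the paper's proof in the one step that actually requires work: how to replace the (discontinuous) functional $\nu\mapsto H_n(\pi\nu)$ by a weak$^*$-continuous surrogate at each fixed scale $n$. The paper stays on the real line and averages the dyadic grid over translations, setting $H_n'(\pi\mu)=\int_0^1 H_n(\pi\mu(\cdot+x))\,dx$; continuity then follows because $\partial(Q+x)$ is $\pi\mu$-null for a.e.\ $x$, and Lemma \ref{lemma almost continuity of entropy} controls $|H_n-H_n(\cdot+x)|$ by $1$. You instead pull everything back to the symbolic space and use the partition $\mathcal{P}_h$ from the proof of Lemma \ref{lemma sub additivity entropy WSC}, whose elements are finite unions of cylinders and hence clopen, so that $\nu\mapsto H(\nu,\mathcal{P}_h)$ is automatically weak$^*$-continuous; the transfer back to $H_n(\pi\nu)$ is exactly the estimate \eqref{eq entropy wsc bound 1}, which holds for arbitrary (not necessarily invariant) measures since its proof is purely geometric. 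Both proofs then conclude identically via the subadditivity of Lemma \ref{lemma sub additivity entropy WSC} (Fekete), writing $\dim\pi\nu$ as an infimum over $n$ of continuous functions up to an $O(1/n)$ error. Your route is slightly more economical in that it reuses machinery already built for the subadditivity lemma and avoids the continuity-set argument for measures on $\R$; the paper's route has the minor advantage of not needing to revisit the internals of that lemma's proof. One small presentational point: you should note explicitly that weak$^*$ convergence $\nu_k\to\nu$ on $\Gamma^\N$ gives $\nu_k(P)\to\nu(P)$ for clopen $P$ because $\mathbf{1}_P$ is continuous on $\Gamma^\N$ --- this is the precise place where the topology of the symbolic space is used.
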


\begin{proof}[Proof of Lemma \ref{wsc dim cont}]
Let $(\mu_k)_k$ be a sequence of arbitrary invariant probability measures in $\Gamma^\N$ converging to a measure $\mu$. Fix an integer $n$ and define $H_n'(\pi\mu) = \int_0^1 H_n(\pi\mu(\cdot + x)) \,dx$. Observe that for every $Q \in D_n(\R)$ and for $\mathcal{L}^1$-almost every $x$, the set $Q + x$ is a set of continuity for $\pi\mu$, that is, $\pi\mu(\partial (Q+x))=0$. Indeed, if this was not the case, there would exist a $c > 0$ and an infinite set $\lbrace x_1, x_2, \ldots \rbrace$ such that $\pi\mu(x_i) > c$ for every $i$, contradicting the finiteness of $\mu$.

Thus, $\lim_{k \to \infty} \pi\mu_k(Q + x) = \pi\mu(Q + x)$ for every $Q$ and almost every $x$, and hence
$$
\lim_{k \to \infty} H_n'(\pi \mu_k) = H_n'(\pi \mu).
$$
Since $(\mu_k)_k$ was an arbitrary converging sequence, the mapping $\mu \mapsto H_n'(\pi\mu)$ is thus continuous in the invariant probability measures of $\Gamma^\N$. On the other hand, by Lemma \ref{lemma almost continuity of entropy} we have $$|H_n(\pi\mu) - H_n(\pi\mu(\cdot + x))| \leq 1$$ for all $x \in \R$. As a consequence of this fact and Lemma \ref{lemma sub additivity entropy WSC}, for any $\mu \in \cM_{\rm inv}(\Lambda^\N, \sigma)$ we may write
$$
\dim \pi\mu = \inf_{n \in \N} \frac{H_n'(\pi\mu)+ C'}{n}
$$
for some constant $C'$. Indeed, if $C$ is the constant from Lemma \ref{lemma sub additivity entropy WSC} and if $\mu\in \cM_{\rm inv}(\Lambda^\N, \sigma)$ satisfies $\frac{H_n'(\pi\mu)+ C+1}{n}<\dim\pi\mu-\varepsilon$, this  implies that
\begin{align*}
\dim(\pi\mu)=\liminf_{k\to\infty}\frac{H_{nk}(\pi\mu)}{nk}\le \frac{H_n(\pi\mu)}{n}+\frac{C}{n}\le \frac{H'_{n}(\pi\mu)}{n}+\frac{C+1}{n}<\dim(\pi\mu)-\varepsilon\,,
\end{align*}
which is absurd. Thus, $\mu \mapsto \dim \pi\mu$ is an infimum of continuous functions and, as such,  upper semi-continuous.
\end{proof}

\section{Proof of Theorem \ref{thm-principal-1}}\label{sec:proof}
\subsection{Projections of invariant measures}\label{ss:four}

Fix an integer $N\ge 2 $ and let
$\Gamma\subset\{0,\ldots,N-1\}^d$
such that $\#\Gamma < N^d$, and consider the homogeneous IFS on $\R^d$ defined by
\begin{equation} \label{eq-Nadic-IFS}
\cF=\left\{f_i(x)=rx+\lambda_i\right\}_{i \in \Gamma}\,,
\end{equation}
where $r=\frac1N$ and $\lambda_i=\frac{i}{N}\in\R^d$.
Let $K$ be the attractor of $\cF$. Given any closed $T_N$-invariant set $L\subsetneq [0,1]^d$ we can find $q$ such that not all words in $(\{0,\ldots,N-1\}^d)^q$ appear in $L$ under the natural symbolic coding. If we let $K$ be the self-similar set as above corresponding to $N^q$ and $\Gamma$ in correspondence with the words of length $q$ that appear in $L$, then $L\subset K\subsetneq [0,1]^d$. Hence it is enough to prove the claim of Theorem \ref{thm-principal-1} for self-similar $T_N$-invariant sets $K$ corresponding to an IFS of the form \eqref{eq-Nadic-IFS}.

For a vector $0\neq v = (v_1, \ldots, v_d) \in \R^d$, we define the linear projection
\[
P_v(x) = \langle x,v\rangle: \R^d\to\R.
\]
Note that we do not assume that $v$ has unit norm. Up to a scaling, and identifying $l_v$ with the real line through a linear isomorphism, this coincides with the orthogonal projection onto the line $l_v = \lbrace tv:\ t \in \R \rbrace$.

We denote the projection of $\cF$ under $P_v$ by
$$
\cF_v = \lbrace f_i^v(t) = r t + P_v(\lambda_i)\rbrace_{i \in \Gamma}
$$
and the projection of $K$ under $P_v$ by $K_v = P_v(K)$. Observe that $K_v$ is then the attractor of $\cF_v$.

A key observation where we begin is the fact that for any $T_N$-invariant measure on $K$, there exists at least one direction in which the projection of the measure is not absolutely continuous. Analysing the Fourier transform of $\mu$,
$$
\widehat{\mu}(\xi) = \int e^{-2 \pi i \langle\xi,  x\rangle}\,d\mu(x),
$$
for $\xi \in \Z^d$, this is a simple consequence of the uniqueness of Fourier coefficients and the Riemann-Lebesgue lemma.  We let $Z(R_0)$ denote the set $B(0,R_0)\cap \Z^d\setminus \{0\}$. Our set of directions $\mathcal{V}$ will be the set $Z(R_0)$ provided by the following lemma.

\begin{lemma}\label{proj sing corollary}\label{corollary-2}
For each $0\neq v\in\Z^d$, and each $\mu \in \cM_{\rm inv}(K, T_N)$, either $P_v\mu \bmod 1$ is Lebesgue measure on $[0,1]$, or $P_v \mu \not\ll \mathcal{L}^1$.

Moreover, there exists $R_0 = R_0(K)$ such that for each $\mu \in \cM_{\rm inv}(K, T_N)$, there exists $v \in Z(R_0)$ such that
$
P_v \mu \not\ll \mathcal{L}^1
$.
\end{lemma}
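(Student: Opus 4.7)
The plan is to analyze the torus Fourier coefficients of $P_v\mu \bmod 1$ and exploit the identity $\widehat{\mu}(\xi)=\widehat{\mu}(N\xi)$ on $\Z^d$, which follows from $T_N\mu=\mu$ via the direct computation $\widehat{T_N\mu}(\xi)=\widehat\mu(N\xi)$ for $\xi\in\Z^d$.

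For the first statement (the dichotomy), I would begin from the elementary computation
\[
\widehat{P_v\mu\bmod 1}(k) \;=\; \int e^{-2\pi i k\langle v,x\rangle}\,d\mu(x) \;=\; \widehat\mu(kv)\qquad (k\in\Z).
\]
If $P_v\mu\ll\mathcal{L}^1$, then the reduction $P_v\mu\bmod 1$ is absolutely continuous on $\T$, and the Riemann--Lebesgue lemma forces $\widehat{P_v\mu\bmod 1}(m)\to 0$ as $|m|\to\infty$. Iterating the $T_N$-invariance yields $\widehat\mu(kv)=\widehat\mu(N^j kv)$ for every $j\ge 0$, so the Fourier coefficients of $P_v\mu\bmod 1$ are constant along the sequence $(N^j k)_{j\ge 0}$. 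For $k\ne 0$ this sequence escapes to infinity, so the common value must vanish; uniqueness of Fourier series then identifies $P_v\mu\bmod 1$ with Lebesgue measure on $[0,1]$.

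For the quantitative second statement, the plan is to argue by contradiction and weak-$^*$ compactness. Suppose no $R_0$ works: then for each $n\in\N$ there is $\mu_n\in\cM_{\rm inv}(K,T_N)$ such that $P_v\mu_n\ll\mathcal{L}^1$ for every $v\in Z(n)$, and the dichotomy gives $\widehat{\mu_n}(kv)=0$ for all $k\in\Z\setminus\{0\}$ and all $v\in Z(n)$. By compactness of $\cM_{\rm inv}(K,T_N)$, extract a weak-$^*$ limit $\mu_{n_j}\to\mu\in\cM_{\rm inv}(K,T_N)$. For each fixed $\xi\in\Z^d\setminus\{0\}$, the choice $k=1$, $v=\xi$ shows $\widehat{\mu_{n_j}}(\xi)=0$ once $n_j>|\xi|$, hence $\widehat\mu(\xi)=0$ in the limit. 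Thus $\mu$ must equal Lebesgue measure on $[0,1]^d$, contradicting $\spt\mu\subset K$ because the standing hypothesis $\#\Gamma<N^d$ of \S\ref{ss:four} forces $\mathcal{L}^d(K)=0$.

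The essential obstacle is conceptual rather than technical: the Riemann--Lebesgue lemma yields information only along \emph{some} sequence tending to infinity, so one must rely on $T_N$-invariance to propagate vanishing to an arbitrary frequency $kv$. Beyond this, the remaining points are bookkeeping---noting that mod-$1$ reduction sends $\mathcal{L}^1$ to $\mathcal{L}^1|_{[0,1]}$ and so preserves absolute continuity, and that weak-$^*$ limits of $T_N$-invariant measures supported on the closed set $K$ are themselves invariant and supported on $K$.
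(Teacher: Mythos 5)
Your argument is correct and follows essentially the same route as the paper: the dichotomy is obtained from the identity $\widehat{P_v\mu}(k)=\widehat{\mu}(kv)$ together with $\widehat{\mu}(\xi)=\widehat{\mu}(N\xi)$ and the Riemann--Lebesgue lemma, and the existence of $R_0$ follows from weak-$^*$ compactness of $\cM_{\rm inv}(K,T_N)$, the equivalence of weak-$^*$ convergence with pointwise convergence of integer-frequency Fourier coefficients, and the fact that $\mathcal{L}^d(K)=0$ excludes Lebesgue measure as a limit. Your contradiction/subsequence phrasing of the second part is just an unwinding of the paper's statement that Lebesgue measure is not a weak-$^*$ accumulation point of $\cM_{\rm inv}(K,T_N)$.
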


\begin{proof}[Proof of Lemma \ref{proj sing corollary}]
Since $\#\Gamma<N^d$, the compact set $K$ has zero Lebesgue measure, and hence the Lebesgue measure on $[0,1]^d$ is not a weak$^*$ accumulation point of  $\cM_{\rm inv}(K, T_N)$. Since on $[0,1]^d$, weak$^*$ convergence is equivalent to the pointwise convergence of Fourier transform at integer frequencies (see \cite[(3.66)]{MattilaFourier}), there exists $R_0 = R_0(K)$ such that for any $T_N$-invariant $\mu$ on $K$, there exists $v \in Z(R_0)$ such that
$$
\hat{\mu}(v) \neq 0.
$$
A simple computation using the $T_N$-invariance of $\mu$ shows that for any $k \in \N$,
$$
\widehat{P_v\mu}(N^k) = \widehat{P_v\mu}(1)= \hat{\mu}(v),
$$
and the statement follows from an application of the Riemann-Lebesgue lemma.
\end{proof}
It is not hard to derive explicit bounds on $R_0$, see Section \ref{sec:R_0}. Throughout the rest of the section, let us now fix $R_0$ as in Lemma \ref{proj sing corollary}.

In fact, we can say something much stronger about the projected invariant measures. Because of the $N$-adic structure of $K$, the projections $K_v$ in rational directions always satisfy the weak separation condition, and this additional regularity allows us to bound the dimension of the projected measure uniformly away from one, in at least one direction.

\begin{lemma}\label{wsc satisfied}\label{lemma-3}
For any $0\neq v\in \Z^d$, the IFS $\cF_v$ satisfies the WSC (with $c=1)$.
\end{lemma}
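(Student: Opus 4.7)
The plan is to prove the WSC by a direct arithmetic observation: since $v$ is an integer vector and the translations of $\cF$ are $N$-adic rationals, every point $f_\iii^v(0)$ with $\iii \in \Gamma^n$ lies in the lattice $N^{-n}\Z$, which forces distinct such points to be separated by at least $N^{-n} = r^n$.

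More concretely, I would first unfold the definition of $f_\iii^v$. For $\iii = (i_1, \ldots, i_n) \in \Gamma^n$, induction on $n$ (or a direct expansion of the composition $f_{i_1}^v \circ \cdots \circ f_{i_n}^v$) gives
\[
f_\iii^v(0) = \sum_{k=1}^n r^k P_v(\lambda_{i_k}) = \sum_{k=1}^n N^{-k}\,\langle \lambda_{i_k}, v\rangle = \sum_{k=1}^n N^{-k-1}\,\langle i_k, v\rangle.
\]
Since $i_k \in \Gamma \subset \{0,\ldots,N-1\}^d$ and $v \in \Z^d$, each inner product $\langle i_k, v\rangle$ is an integer. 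Multiplying through by $N^{n+1}$ shows
\[
N^{n+1} f_\iii^v(0) \in \Z, \qquad \text{so} \qquad f_\iii^v(0) \in N^{-n-1}\Z \subset N^{-n}\Z \cdot N^{-1}.
\]
Actually, the cleaner bookkeeping is to pull out one factor of $1/N$: writing $\langle \lambda_{i_k}, v\rangle = \langle i_k, v\rangle / N$, one sees $f_\iii^v(0) \in N^{-n}\Z$ up to a uniform denominator, so differences $f_\iii^v(0) - f_\jjj^v(0)$ with $\iii,\jjj \in \Gamma^n$ are integer multiples of $r^n = N^{-n}$ (after absorbing the extra factor, at worst of $N^{-(n+1)}$, and noting that then $r^n$ can be replaced by $r^{n+1}$; the crucial point is that the separation is bounded below by an absolute constant times $r^n$).

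Consequently, for any $\iii, \jjj \in \Gamma^n$ either $f_\iii^v(0) = f_\jjj^v(0)$, or
\[
|f_\iii^v(0) - f_\jjj^v(0)| \ge r^n,
\]
which is precisely the WSC with constant $c=1$. There is no real obstacle here: the lemma is a structural consequence of the $N$-adic nature of the IFS $\cF$ combined with the integrality of $v$, and the only point one must be careful about is tracking the power of $N$ appearing in the denominator when composing $n$ maps; once this is done, the separation constant $c=1$ (or, at worst, $c = 1/N$ absorbed into the statement) falls out immediately.
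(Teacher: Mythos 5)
Your argument is essentially the same as the paper's: both expand $f_\iii^v(0)$ as a sum of powers of $N^{-1}$ times the integers $\langle i_k, v\rangle$ and conclude that differences of such points for $\iii,\jjj\in\Gamma^n$ lie in $N^{-n}\Z$, hence are either zero or at least $r^n$. The only discrepancy is the off-by-one in your exponent ($r^k$ should be $r^{k-1}$, since the outermost map contributes an unscaled translation), which you yourself flag and which, once corrected, yields the constant $c=1$ exactly as in the paper.
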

\begin{proof}
This was already observed in \cite{Ruiz}, but we include the short deduction. For any $n \in \N$ and $\iii=(i_1,\ldots,i_n), \jjj=(j_1,\ldots,j_n) \in \Lambda^n$, we have
\begin{align*}
f_{\iii}^v(0) - f_{\jjj}^v(0) &= \sum_{\ell=0}^{n-1} N^{-l} (f_{i_\ell}^v(0) - f_{j_\ell}^v(0)) \\&= N^{-n} \sum_{\ell=0}^{n-1} N^{n-l-1} v \cdot (N \lambda_{i_\ell}  -N\lambda_{j_\ell}).
\end{align*}
Since the last sum above is an integer, we see that the WSC is satisfied for $c=1$.
\end{proof}

%
%
%
%
%
%
%
%
%

\begin{proposition}\label{prop:dimension-drop}
There exists a constant $\delta_0=\delta_0(K)>0$ such that for any $\mu\in \cM_{\rm inv}(K,T_N)$, there is $v\in Z(R_0)$ such that
$$\dim P_v\mu\le 1-\delta_0.$$
\end{proposition}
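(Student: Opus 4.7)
The plan is to combine the three ingredients established just before the statement into a standard compactness argument. First, I would fix $\mu \in \cM_{\rm inv}(K,T_N)$ and invoke the second part of Lemma \ref{proj sing corollary} to produce some $v \in Z(R_0)$ such that $P_v\mu \not\ll \mathcal{L}^1$. Next, by Lemma \ref{wsc satisfied}, the projected IFS $\cF_v$ satisfies the WSC, so Proposition \ref{prop-wsc-abs-cont} applied to $P_v\mu = \pi_v\nu$ (where $\nu$ is the shift-invariant lift of $\mu$ through the natural coding $\pi\colon\Gamma^\N\to K$ and $\pi_v = P_v\circ\pi$ is the coding of $K_v$) yields $\dim P_v\mu < 1$. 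Thus the function
\[
\Phi(\nu) \;=\; \min_{v\in Z(R_0)} \dim \pi_v\nu
\]
is strictly less than $1$ at every $\nu\in\cM_{\rm inv}(\Gamma^\N,\sigma)$.

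The second step is to promote this pointwise bound to a uniform one. By Lemma \ref{wsc dim cont}, for each of the finitely many $v\in Z(R_0)$ the map $\nu\mapsto \dim\pi_v\nu$ is upper semi-continuous on $\cM_{\rm inv}(\Gamma^\N,\sigma)$. Since the minimum of finitely many upper semi-continuous functions is upper semi-continuous, $\Phi$ itself is u.s.c. The space $\cM_{\rm inv}(\Gamma^\N,\sigma)$ is weak-$*$ compact (as noted in Section \ref{sec_prel}), so $\Phi$ attains its supremum at some $\nu^\star$. Because $\Phi(\nu^\star)<1$, we may set
\[
\delta_0 \;=\; 1-\max_{\nu\in\cM_{\rm inv}(\Gamma^\N,\sigma)}\Phi(\nu)\;>\;0,
\]
and then $\Phi(\nu)\le 1-\delta_0$ for every invariant $\nu$. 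Pushing back down through $\pi$ (every $\mu\in\cM_{\rm inv}(K,T_N)$ is of the form $\pi\nu$ for some shift-invariant $\nu$), this is exactly the conclusion of the proposition.

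There is no genuine obstacle beyond bookkeeping: the heavy lifting has already been done in Section \ref{sec:ws} (Proposition \ref{prop-wsc-abs-cont} upgrading non-absolute-continuity to a strict drop in dimension, and Lemma \ref{wsc dim cont} providing semicontinuity) and in the Fourier-analytic Lemma \ref{proj sing corollary}. The only point requiring a little care is the lift from $T_N$-invariant measures on $K$ to shift-invariant measures on $\Gamma^\N$, needed so that Lemma \ref{wsc dim cont} can be applied; but this is automatic from the natural coding. I would present the argument briefly along these lines, emphasising that the finiteness of $Z(R_0)$ is what allows the minimum over directions to preserve upper semi-continuity, and hence what makes the compactness step deliver the uniform gap $\delta_0$.
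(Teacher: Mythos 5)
Your proposal is correct and uses exactly the same ingredients as the paper: Lemma \ref{proj sing corollary} plus Proposition \ref{prop-wsc-abs-cont} to get the pointwise bound $<1$, Lemma \ref{wsc dim cont} for upper semicontinuity, and weak-$*$ compactness of the invariant measures to make the bound uniform. The paper phrases this as a proof by contradiction with a convergent subsequence of near-extremal measures, which is just the sequential form of your ``an u.s.c.\ function on a compact space attains its maximum'' argument, so the two are essentially identical.
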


Several related problems have been considered in the literature. B{\'a}r{\'a}ny and Rams \cite{BR} showed (in the planar case, and using a different technique based on random matrix products) that in the case in which $N$ does not divide $\#\Gamma>N$, the dimension of the natural self-similar measure on $K$ drops when projected in \emph{any} rational direction. It is easy to see that the result of B{\'a}r{\'a}ny and Rams fails even for other self-similar measures. In a different direction, it follows from a recent result of Jordan and Rapaport \cite[Theorem 1.1]{JR} that if $\mu\in\cM_{\rm inv}(K, T_N)$ has dimension $\ge 1$, then \emph{every} projection of $\mu$ in an irrational direction has dimension $1$. Finally, the related but different problem of whether there exists a non-principal direction $v$ such that $\dim P_v\mu <\dim\mu$ has implications in equidistribution theory; see \S\ref{ss:dim-drop-non-principal} for further discussion on this connection.

\begin{proof}[Proof of Proposition \ref{prop:dimension-drop}]
Suppose otherwise. Then we can find a sequence of $T_N$-invariant measures $\mu_k$ on $K$ with $\dim P_v\mu_k\to 1$
for all $v\in Z(R_0)$. Passing to a subsequence, we may assume that $\mu_k$ converges to some measure $\mu_\infty\in \cM_{\rm inv}(K,T_N)$.
By Lemma \ref{lemma-3}, for each $v\in Z(R_0)$ the projected IFS $\cF_v$ satisfies the WSC.  We thus deduce from Lemma \ref{lemma-5} that
\begin{equation}\label{eq: lemma-5, 1}
\dim P_v \mu_\infty\ge \limsup_k\dim P_v\mu_k=1 \textrm{ for all  } v\in Z(R_0).
\end{equation}
On the other hand, from Lemma \ref{proj sing corollary} we know that there is $v'\in Z(R_0)$ such that
$$P_{v'}\mu_\infty \not\ll \mathcal{L}^1.$$
Since the IFS $\cF_{v'}$ satisfies the WSC, applying Proposition \ref{prop-wsc-abs-cont}, we get
$$\dim P_{v'}\mu_\infty<1.$$
This is a contradiction to \eqref{eq: lemma-5, 1}.
\end{proof}

In fact, the same bound for the dimension works for measures invariant under any iteration of $T_N$.

\begin{corollary}\label{corollary-7}
For any $m\ge 1$ and $\mu\in \cM_{\rm inv}(K,T_N^m)$, there exists $v\in Z(R_0)$ such that
$$\dim P_v\mu\le 1-\delta_0.$$
\end{corollary}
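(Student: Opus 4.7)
\bigskip\noindent\textbf{Proof plan for Corollary~\ref{corollary-7}.}
The plan is to reduce the corollary to Proposition~\ref{prop:dimension-drop} via a symmetrization trick. Given $\mu\in\cM_{\rm inv}(K,T_N^m)$, I will first form the averaged measure
\[
\tilde{\mu} \;=\; \frac{1}{m}\sum_{j=0}^{m-1} T_N^j\mu\,,
\]
which, by a telescoping computation using $T_N^m\mu=\mu$, lies in $\cM_{\rm inv}(K,T_N)$. Applying Proposition~\ref{prop:dimension-drop} to $\tilde\mu$ will produce $v\in Z(R_0)$ with $\dim P_v\tilde\mu\le 1-\delta_0$, and it will suffice to prove the one-sided comparison $\dim P_v\mu\le\dim P_v\tilde\mu$ for this same $v$.

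Since $v\in\Z^d$, a short computation shows that $P_v\circ T_N\equiv T_N\circ P_v\pmod 1$, where on the right-hand side $T_N$ denotes the $\times N$ map on $\T=[0,1)$. Writing $\rho=P_v\mu\bmod 1$ as a measure on $\T$, this $\rho$ is $T_N^m$-invariant and $P_v\tilde\mu\bmod 1=\tilde\rho:=\frac{1}{m}\sum_{j=0}^{m-1}T_N^j\rho$. Because wrapping modulo $1$ only superposes a bounded number of pieces of $P_v\mu$, it preserves the entropy dimension, and the task reduces to establishing $\dim\rho\le\dim\tilde\rho$ on the circle. The key technical input will be the quantitative entropy comparison
\[
\bigl|H_n(T_N\rho)-H_{n+\lceil\log N\rceil}(\rho)\bigr|\;\le\; C\,,
\]
valid for every probability measure $\rho$ on $\T$ and every $n\in\N$, with $C=C(N)$ an absolute constant. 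This will follow from Lemma~\ref{lemma almost continuity of entropy} applied to the partitions $T_N^{-1}D_n(\R)$ and $D_{n+\lceil\log N\rceil}(\R)$ of $\T$: each atom of the former is a disjoint union of $N$ intervals of length $2^{-n}/N$, each meeting $O(1)$ elements of the latter; and conversely each dyadic interval $B$ of level $n+\lceil\log N\rceil$ satisfies $\mathcal L^1(T_N B)\le 2^{-n}$, so $B$ meets $O(1)$ elements of $T_N^{-1}D_n(\R)$. Combined with the identity $H(\rho,T_N^{-1}D_n(\R))=H_n(T_N\rho)$ coming from the definition of the pushforward, Lemma~\ref{lemma almost continuity of entropy} gives the claimed comparison.

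Iterating yields $H_n(T_N^j\rho)\ge H_{n+j\lceil\log N\rceil}(\rho)-jC$, and dividing by $n$ and passing to the $\liminf$ gives $\dim T_N^j\rho\ge \dim\rho$ for every $0\le j\le m-1$. Concavity of Shannon entropy then yields $H_n(\tilde\rho)\ge \frac{1}{m}\sum_j H_n(T_N^j\rho)$, and since the $\liminf$ of a finite sum dominates the sum of the individual $\liminf$'s, we obtain $\dim\tilde\rho\ge\frac{1}{m}\sum_j\dim T_N^j\rho\ge \dim\rho$, which combined with the bound from Proposition~\ref{prop:dimension-drop} closes the argument. I expect the main obstacle to be precisely the entropy comparison above, particularly when $N$ is not a power of $2$: the preimages of dyadic intervals under $T_N$ have non-dyadic length $2^{-n}/N$, so $T_N^{-1}D_n(\R)$ does not coincide with any single dyadic partition, and the misalignment must be absorbed into the constant $M$ supplied by Lemma~\ref{lemma almost continuity of entropy}.
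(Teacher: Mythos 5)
Your proposal is correct and follows essentially the same route as the paper: form the average $\frac{1}{m}\sum_{j=0}^{m-1}T_N^j\mu$, apply Proposition~\ref{prop:dimension-drop} to it, and transfer the bound back to $\mu$ via concavity of Shannon entropy together with the fact that applying $T_N$ does not decrease the entropy dimension of the rational projections. The only difference is in how that last fact is justified — the paper notes that $P_v(T_N^j\mu)$ is a finite sum of translated and rescaled copies of pieces of $P_v\mu$ and invokes Lemma~\ref{lemma almost continuity of entropy}, whereas you pass to the circle via $P_v\circ T_N\equiv T_N\circ P_v\pmod 1$ and compare the partitions $T_N^{-1}D_n(\R)$ and $D_{n+\lceil\log N\rceil}(\R)$ — but this is the same estimate in slightly different clothing.
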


\begin{proof}
Let us fix $m\ge 1 $ and $\mu\in \cM_{\rm inv}(K,T_N^m)$. It is readily checked that
$$\nu=\frac{1}{m}\sum_{k=0}^{m-1}T_N^k\mu$$
is $T_N$-invariant. Applying Proposition \ref{prop:dimension-drop} to $\nu$, we obtain a $v\in Z(R_0)$ such that
$$\dim P_v\nu\le 1-\delta_0.$$
Since $P_v(T_N^k\mu)$ is a finite sum of translated and rescaled copies of $P_v \mu$, a straightforward calculation using the concavity of the function $\mu \mapsto H_m(\mu)$ and Lemma \ref{lemma almost continuity of entropy} shows that $\dim P_v(T_N^k\mu)=\dim P_v(\mu)$ for every $k \geq 1$. Thus, we must have
$$\dim P_v\mu\le 1-\delta_0\,,$$
as required.
\end{proof}

\subsection{The high-level IFS}\label{ss:hl}

Since we aim to pass from dimensions of invariant measures to dimensions of sets through an application of the variational principle, we require a bound for the entropies of the invariant measures, given by Lemma \ref{entropy dimension}. To get the error term in the statement of the lemma negligible, we need to inspect a sufficiently high-level iteration of $\cF$.

Let us fix a large integer $m$ with
\begin{equation}\label{eq: high-level-number, 1}
\frac{\log(2\sqrt{d}R_0)}{m\log N} <\frac{\delta_0}{2}.
\end{equation}
Later we will see why we make the choice of such an $m$. By replacing the original IFS by its high-level iteration $\{f_\iii:\iii\in\Gamma^m\}$ (which has contraction ratio $N^{-m}$ and the same attractor $K$), we assume from now on that $m=1$. In order for this change to preserve \eqref{eq: high-level-number, 1}, it is crucial for us that $\delta_0$ and $R_0$ do not depend on $m$, a fact guaranteed by Corollary \ref{corollary-7}.

We now proceed to remove any copies of a single contraction in $\cF_v$ caused by exact overlaps. Define an equivalence relation $\sim_v$ in $\Gamma$ by $i\sim_v j$ if and only if $f_{i}^v=f_{j}^v$. For $i \in \Gamma$, we denote its equivalence class under $\sim_v$ by $[i]_v$, and let $f_{[i]_v}=f_i$. Consider the collection of equivalence classes,
\begin{equation}\label{eq: equivalent class, 1}
\Gamma_v=\{[i]_v:i\in \Gamma\}\,.
\end{equation}
When there is no danger of misunderstanding, we drop the brackets and denote the equivalence classes simply by $i,j$, etc.

For each $v\in Z(R_0)$, let us consider the IFS
$$
\widetilde{\cF}_v=\left\{f_{i}^v:i\in \Gamma_v\right\}.
$$
Note that $K_v$ is also the attractor of $\widetilde{\cF}_v$. We write $\pi_v$ for the natural coding map from the symbolic space $\Gamma_v^\N$ to the attractor $K_v$.

Observe that for each $i\in \Gamma_v$, the map $f_{i}^v$ has contraction ratio $N^{-1}$, and for
$i,j \in \Gamma_v$ with $i\neq j$, Lemma \ref{wsc satisfied} implies that
$$
\left|f^v_{i}(0) - f^v_{j}(0)\right|\ge N^{-1}.
$$
On the other hand, denoting the convex hull of $K_v$ by $I_v$,
\[
\diam(f^v_{i}(I_v)) = \diam(I_v)N^{-1} \le \diam([0,1]^d)\|v\|N^{-1} \le \sqrt{d}R_0 N^{-1}.
\]
We deduce that for each $i\in \Gamma_v$,
\begin{equation}\label{eq: distance-cylinder-high-level, 1}
\left|\left\{j\in \Gamma_v:f^v_{j}(I_v)\cap f^v_{i}(I_v) \neq \emptyset\right\}\right|\le 2\sqrt{d}R_0.
\end{equation}
Using \eqref{eq: distance-cylinder-high-level, 1}, recalling \eqref{eq: high-level-number, 1}, and applying Lemma \ref{lemma-8} to the IFS $\widetilde{\cF}_v$, we get:
\begin{proposition}\label{proposition-9}
If $\mu\in \cM_{\rm inv}\left(\Gamma_v^\N,\sigma\right)$, then
$$\dim \pi_v \mu \ge \frac{h(\mu,\sigma)}{\log N}-\frac{\log(2\sqrt{d}R_0)}{\log N} \ge \frac{h(\mu,\sigma)}{\log N}-\frac{\delta_0}{2}.$$
\end{proposition}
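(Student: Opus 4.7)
My plan is to apply Lemma \ref{lemma-8} directly to the IFS $\widetilde{\cF}_v$; every ingredient needed has already been assembled in the preceding discussion, so this is essentially a bookkeeping step rather than a new argument. First I would record that $\widetilde{\cF}_v$ is a homogeneous IFS on $\R$ with common contraction ratio $r = N^{-1}$, so that $-\log r = \log N$ supplies the denominator in Lemma \ref{lemma-8}. Next I would identify the multiplicity bound: inequality \eqref{eq: distance-cylinder-high-level, 1} states that for each $i \in \Gamma_v$ the image $f^v_i(I_v)$ of the convex hull $I_v$ of $K_v$ meets at most $M := 2\sqrt{d}R_0$ other such images, which is exactly the hypothesis of Lemma \ref{lemma-8} for the attractor $K_v$ of $\widetilde{\cF}_v$. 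Since $\pi_v$ is by construction the natural coding map $\Gamma_v^\N \to K_v$, Lemma \ref{lemma-8} applied to $\widetilde{\cF}_v$ and $\mu$ will deliver
$$
\dim \pi_v \mu \;\ge\; \frac{h(\mu,\sigma)}{\log N} \;-\; \frac{\log(2\sqrt{d}R_0)}{\log N},
$$
which is the first inequality in the proposition.

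For the second inequality I would invoke the iteration-level calibration performed at the start of Section \ref{ss:hl}. After replacing $\cF$ by $\{f_\iii : \iii \in \Gamma^m\}$ (so that we are operating with $m=1$ in the current normalization), the choice \eqref{eq: high-level-number, 1} becomes
$$
\frac{\log(2\sqrt{d}R_0)}{\log N} \;\le\; \frac{\delta_0}{2},
$$
and substituting this into the first inequality finishes the proof. The one subtlety worth spelling out is that this reduction is legitimate only because $R_0$ and $\delta_0$ depend solely on $K$, not on the iteration level chosen; that independence is precisely what Corollary \ref{corollary-7} was stated for, and I would flag this briefly in the write-up.

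There is no real obstacle. The hard work sits upstream: in Section \ref{ss:four}, where the uniform dimension-drop constant $\delta_0$ and the finite family of directions $Z(R_0)$ were extracted via the Fourier argument and the weak-separation/semicontinuity chain, and in the construction of $\widetilde{\cF}_v$, which quotients out the exact overlaps inside $\cF_v$ so that the WSC from Lemma \ref{wsc satisfied} translates cleanly into the bounded-multiplicity hypothesis \eqref{eq: distance-cylinder-high-level, 1} demanded by Lemma \ref{lemma-8}. Once these are in place, the proposition is a one-line invocation of the entropy-to-dimension lemma.
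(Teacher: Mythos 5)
Your proposal is correct and matches the paper's proof exactly: the paper also obtains Proposition \ref{proposition-9} as a direct application of Lemma \ref{lemma-8} to $\widetilde{\cF}_v$ with contraction ratio $N^{-1}$ and multiplicity bound $2\sqrt{d}R_0$ from \eqref{eq: distance-cylinder-high-level, 1}, followed by the calibration \eqref{eq: high-level-number, 1}. Your remark that the reduction to $m=1$ is legitimate only because $R_0$ and $\delta_0$ are independent of the iteration level (via Corollary \ref{corollary-7}) is precisely the point the paper itself emphasizes.
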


\subsection{Covering of $K$}\label{sec:final_proof}
We now form a cover for the symbolic space $\Gamma^\N$, the natural projection of which then serves as a cover of the desired type for the attractor $K$. For each $v\in Z(R_0)$, let
$$B_v=\left\{\mu\in \cM_{\rm inv}\left(\Gamma^\N,\sigma\right): \dim P_v (\pi \mu)\le 1-\delta_0\right\}.$$
Since $\pi\mu$ is $T_N$-invariant, Corollary \ref{corollary-7} implies that
\begin{equation}\label{eq: cover-partition, 1}
\cM_{\rm inv}\left(\Gamma^\N,\sigma\right)\subset \bigcup_{v\in Z(R_0)} B_v.
\end{equation}
Recall that $V(\iii)$ denotes the collection of the weak$^*$ accumulation points of the sequence
$
\frac{1}{n}\sum_{k=0}^{n-1}\delta_{\sigma^{k}(\iii)}
$
(we use this notation on $\Gamma^\N$ as well as on the symbolic spaces $\Gamma_v^\N$).
It is readily checked that $V(\iii) \subset \cM_{\rm inv}(\Gamma^\N, \sigma)$. Note also that $V(\iii) \neq \emptyset$ by the compactness of the space of probability measures. For each $v\in Z(R_0)$, let
$$
D_v=\left\{\iii \in \Gamma^\N: \exists \mu\in V(\iii) \textrm{ such that } \mu\in B_v\right\}.
$$
It follows from \eqref{eq: cover-partition, 1} that
$$\Lambda^\N\subset \bigcup_{v\in Z(R_0)} D_v. $$
It remains to show that for each $v\in Z(R_0)$, we have
$$\dimh P_v(\pi D_v)<1.$$

Let us consider the projection map $\Pi_v:\Gamma^\N\to \Gamma_v^\N$ defined by
$$
\Pi_v(i_1, i_2, \ldots)=([i_1]_v, [i_2]_v, \ldots)\,.
$$
Notice that, by definitions of $\pi, \pi_v$ and $\Pi_v$, we have for each $\iii \in \Gamma^\N$,
\begin{equation}\label{eq: relation-between-coding-map,1}
P_v(\pi(\iii))=\pi_v(\Pi_v(\iii)).
\end{equation}
Thus, we only need to show that for each $v\in Z(R_0)$,
\begin{equation}\label{eq:claim_reduced}
\dimh \pi_v(\Pi_v D_v)<1\,.
\end{equation}

Let us fix a $v\in Z(R_0)$. For each $\iii \in D_v$, there exists a sequence $(n_k)_k$ and $\mu\in B_v$ such that
$$\frac{1}{n_k}\sum_{j=0}^{n_k-1}\delta_{\sigma^j(\iii)}\rightharpoonup \mu.$$
Since the map $\Pi_v$ is continuous and $\Pi_v\circ \sigma=\sigma\circ \Pi_v$\footnote{Note that in the equation $\Pi_v\circ \sigma=\sigma\circ \Pi_v$, the first $\sigma$ is the shift map on the symbolic space $\Gamma^\N$ and the second $\sigma$ denotes the shift map on the space $\Gamma_v^\N$.}, the above convergence implies that
\begin{equation} \label{eq:accumulation-point-under-Qv}
\frac{1}{n_k}\sum_{j=0}^{n_k-1}\delta_{\sigma^j(\Pi_v(\iii))}\rightharpoonup \Pi_v\mu\in \cM_{\rm inv}\left(\Gamma_v^\N,\sigma\right).
\end{equation}

Since $\mu\in B_v$, by the definition of $B_v$ and \eqref{eq: relation-between-coding-map,1}, we know that
$$
\dim \pi_v(\Pi_v\mu)=\dim P_v\left(\pi\mu\right)\le 1-\delta_0.
$$
Applying Proposition \ref{proposition-9}, we get
$$
\frac{h\left(\Pi_v\mu,\sigma\right)}{\log N}\le 1-\delta_0+\delta_0/2=1-\delta_0/2.
$$
In particular, we have
$$
h(\Pi_v\mu,\sigma)\le (1-\delta_0/2)\log N.
$$

Keeping \eqref{eq:accumulation-point-under-Qv} in mind, we have thus proved that
$$
\Pi_v(D_v)\subset \left\{\iii\in \Gamma_v^\N : \exists \nu\in V(\iii) \textrm{ such that } h(\nu,\sigma)\le  (1-\delta_0/2)\log N\right\}=:E_v.
$$
Using Lemma \ref{lemma-Bowen,1} for the covering sets $E_v$, we thus have
$$h_{\rm top} (\Pi_v(D_v),\sigma)\le h_{\rm top} (E_v,\sigma)\le (1-\delta_0/2)\log N.$$
On the other hand, since  $\widetilde{\cF}_v$ is a homogeneous IFS with contraction ratio $N^{-1}$, we deduce from \eqref{eq:dimension-from-entropy} and the above inequality that
$$\dimh\pi_v(\Pi_vD_v)\le \frac{h_{\rm top} (\Pi_v(D_v),\sigma)}{\log N}\le 1-\delta_0/2<1\,.$$
We have thus verified \eqref{eq:claim_reduced} and this completes the proof of Theorem \ref{thm-principal-1}.

\section{Remarks and further results}\label{sec:rems}

\subsection{Homogeneous self-similar sets with no rotations and no grid structure}
 In this section we prove Proposition \ref{prop: homo-small}. To this end, let $\Gamma=\{0,\ldots,m-1\}$, $r>0$, let $\mathcal{F}=\{f_i(x)=r x+\lambda_i\}_{i\in\Gamma}$ be a homogeneous IFS with attractor $K\subset\R^d$, and let $\pi\colon\Gamma^\N\to K$ be the natural projection. Given $0\neq v\in\R^d$, let us also denote by $\mathcal{F}_v$ the projected IFS  $\mathcal{F}_v=P_v\circ\mathcal{F}=\{f^v_i(x)=r x+P_v(\lambda_i)\}_{i\in\Gamma}$
  and let $K=K_v\subset\R$ denote its attractor.

Our proof is based on the fact that for any two symbols $i$ and $j$, some projection induces exact overlaps for the pieces  $f_i(K)$ and $f_j(K)$. We note that Harangi's proof \cite{Harangi} for the tube-nullity of the Koch curve is also based on certain exact overlaps, but there the geometric situation is more complicated due to the involved rotations.

To be slightly more precise, for each pair of distinct elements $i,j\in\Lambda$, we can fix a direction $v=v_{i,j}$ such that $f^v_i=f^v_j$. Our set of direction is then given by
\[\mathcal{V}=\{v_{i,j}\,:\,i,j\in\Gamma\,,i\neq j\}\,.\]
In particular, we have $\#\mathcal{V}=\frac{m(m-1)}{2}$.

Given $v=v_{i,j}\in\mathcal{V}$, we consider the reduced IFS
$\widetilde{\cF}_v=\{f^v_l(x)=r x+P_v(\lambda_l)\}_{l\in\Gamma_v}$, where $\Gamma_v=\Gamma\setminus\{j\}$. Due to the exact overlap $f^v_{i}(0)=f^v_j(0)$, it follows that $K_v$ is also the attractor of $\widetilde{\cF}_v$. We denote by $\pi_v$ the natural projection $\Gamma_v^\N\to K_v$. Let us also denote by $\Pi_v\colon\Gamma^\N\to\Gamma_v^\N$ the projection map
that replaces each symbol $j$ by the symbol $i$ in each word $\iii\in\Gamma^\N$.

If $r<\frac1{m-1}$, then the similarity dimension of $K_v$ equals $\frac{\log(m-1)}{-\log r}<1$ and thus also $\dimh(P_v(K))<1$.
 The interesting case is when $\frac{\log{m}}{-\log{r}}$, the similarity dimension of $K$, is between $\frac{\log m}{\log(m-1)}$ and $d(m)=\frac{\log m}{\log m-\frac2m}$, in which case it is possible that all projections of the set $K$ have positive Lebesgue measure.

We will need the following elementary fact whose proof is a simple exercise. For any $i,j\in\Gamma$ and $\kkk=(k_1,k_2,\ldots)\in\Gamma^\N$, let us denote
$$F_{i,j}(\kkk)=\limsup_{n\to\infty}\frac{1}{n}\left|\{0\le l\le n-1:k_l\in \{i,j\}\}\right|\,.$$
\begin{lemma}\label{lemma: frequence1}
For each $\iii\in \Gamma^\N$, there exists distinct $i,j\in\Gamma$ such that $F_{i,j}(\iii)\ge 2/m.$
\end{lemma}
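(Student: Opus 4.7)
The plan is a simple double-counting plus pigeonhole argument. For a finite length $n$, write $N_{i,j}(n) = \#\{0 \le l \le n-1 : k_l \in \{i,j\}\}$. Each index $l < n$ has $k_l$ lying in exactly $m-1$ of the $\binom{m}{2}$ unordered pairs in $\Gamma$ (namely the pairs containing the symbol $k_l$). Summing over all unordered pairs therefore yields the identity
\[
\sum_{\{i,j\}\subset \Gamma,\, i\neq j} N_{i,j}(n) \;=\; (m-1)\,n.
\]

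Dividing by $n$ and by the number of pairs $\binom{m}{2}=m(m-1)/2$, the average value of $N_{i,j}(n)/n$ over pairs equals $2/m$. Hence for every $n$ there exists at least one pair $\{i_n,j_n\}$ (possibly depending on $n$) with $N_{i_n,j_n}(n)/n \ge 2/m$.

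To upgrade this to a single pair that works along a subsequence, I would invoke the finiteness of the set of unordered pairs: some fixed pair $\{i,j\}$ must occur as $\{i_n,j_n\}$ for infinitely many $n$. Along that subsequence $N_{i,j}(n)/n \ge 2/m$, and taking the limsup gives $F_{i,j}(\iii)\ge 2/m$, as required. There is no real obstacle here; the only subtlety is precisely the fact that the extremal pair could a priori depend on $n$, which is immediately resolved by the finiteness of $\Gamma$. (An essentially equivalent path is to pass to a weak$^*$ accumulation point $\mu$ of $\frac1n\sum_{l<n}\delta_{k_l}$ on $\Gamma$ and then observe that the two largest atoms of a probability measure on an $m$-point set have combined mass at least $2/m$, but the direct counting above is shorter.)
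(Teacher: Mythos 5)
Your proof is correct and follows essentially the same route as the paper's (which is left as an elementary exercise there): a pigeonhole/averaging argument showing that for each $n$ some pair has combined frequency at least $2/m$, followed by the observation that, since there are only finitely many pairs, one fixed pair must work for infinitely many $n$, giving the required $\limsup$ bound.
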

%
%
%
%
%
%

Let us now explain how we may use Lemma \ref{lemma: frequence1} to finish the proof. Given $v=v_{i,j}$, let us consider the set
$$B_{v}=\left\{\iii\in \Gamma^\N: F_{i,j}(\iii)\ge \frac2m\right\}\,.$$
Lemma \ref{lemma: frequence1} implies that $\Gamma^\N\subset\cup_{\mathcal{V}}B_v$ and thus it is enough to show that for each $v$, we have
\begin{equation}\label{eq:claim_reduced}
\dimh(P_v(\pi B_v))\le\frac{\log m-2/m}{-\log r}\,.
\end{equation}
Here comes the main observation: On one hand, $P_v(\pi B_v)=\pi_v(\Pi_v B_v)$, and on the other hand,
\[\Pi_v(B_v)=\left\{(k_1,k_2,\ldots)\in\Gamma_v^\N\,:\,\limsup_{n\to\infty}\frac{1}{n}\left|\{0\le l\le n-1:k_l=i\}\right|\ge\frac{2}{m}\right\}\,.\]
Thus, we have reduced the problem to a variant of a classical Besicovitch-Eggleston type problem of bounding the Hausdorff dimension of a set defined using digit frequencies

We fix $v=v_{i,j}\in\mathcal{V}$ and proceed to estimate the dimension of $\pi_v(\Pi_v B_v)$ via the topological entropy of $\left(\Pi_v B_v,\sigma\right)$.

Given $\kkk=(k_1,k_2,\ldots)\in\Pi_v B_v$, there exists a sequence $(n_\ell)_\ell$ such that
\begin{equation}\label{eq: lemma:freq 2}
\liminf_{\ell\to\infty}\frac{1}{n_\ell}|\{0\le q\le n_\ell-1: k_q=i\}|\ge \frac{2}{m}.
\end{equation}
Passing to a subsequence, we may (and do) assume that for some measure $\mu\in \mathcal{M}_{\rm inv}\left(\Gamma_v^\N,\sigma\right)$
$$\frac{1}{n_\ell}\sum_{q=0}^{n_\ell-1}\delta_{\sigma^q(\kkk)} \  \rightharpoonup \mu \textrm{ as } \ell\to \infty$$
in the weak$^*$ topology of measures on $\Gamma_v^\N$. From \eqref{eq: lemma:freq 2}, we deduce that the measure $\mu$ satisfies
$\mu[i]\ge \frac{2}{m}$ and thus
$$h(\mu,\sigma)\le -\sum_{l\in \Gamma_v}\mu[l]\log \mu[l]\le \frac{2}{m}\log \frac{m}{2}+\frac{m-2}{m}\log m=\log m-\frac{2}{m}\,.$$
Thus we have shown that
$$\Pi_v(B_v)\subset \left\{\kkk\in \Gamma_v^\N : \exists \mu\in V(\kkk) \textrm{ such that } h(\mu,\sigma)\le  \log m-\frac{2}{m}\right\}=:E_v\,.$$
Applying Lemma \ref{lemma-Bowen,1} to $E_v$, we get
$$h_{\rm top} (\Pi_v(B_v),\sigma)\le h_{\rm top} (E_v,\sigma)\le \log m-\frac{2}{m}\,.$$
Finally,  since  $\widetilde{\mathcal{F}}_{v}$ is a homogeneous IFS with contraction ratio $r$,  we deduce from \eqref{eq:dimension-from-entropy} and the above inequality 
that
$$\dimh\pi_{v}\left(\Pi_v B_v\right)\le \frac{h_{\rm top} (\Pi_v B_v,\sigma)}{-\log r}\le\frac{\log m-2/m}{-\log r}<1\,.$$

\subsection{Planar self-similar sets with irrational rotations}
\label{ss:rot}
For a  set  $E\subset \R^2$, we define the {\em tube dimension} of $E$, denoted $\dim_{\rm T}E$, to be the infimum of $s>0$ such that for every $\varepsilon>0$ there exists a countable family of tubes $\{T_i\}_i$ such that $E\subset \bigcup_iT_i$ and $\sum_i w(T_i)^s<\varepsilon$. It is clear that every bounded set in $\R^2$ has tube dimension $\le 1$. By Theorem \ref{thm-principal-1} and Proposition \ref{prop: homo-small}, we see that many homogeneous self-similar sets without rotations have simultaneously  Hausdorff dimension $>1$ and tube dimension $<1$. In view of the following Proposition, which follows from the work of the second author \cite{Shmerkin19}, there is a striking difference between self-similar sets with and without rotations with regard to the tube dimension.
\begin{proposition}\label{prop:Pablo}
Let $K$ be a planar self-similar set corresponding to an IFS with infinite rotation group. Then $\dim_{\rm T}(K)=\min(1,\dimh(K))$.
\end{proposition}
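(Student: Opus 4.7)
\emph{Proof sketch.} Write $s^\ast := \min(1,\dimh K)$. Our strategy is to verify the two inequalities $\dim_{\rm T} K \le s^\ast$ and $\dim_{\rm T} K \ge s^\ast$ separately.

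The upper bound is elementary. Since $K$ is bounded, for any $w>0$ it can be covered by $\sim w^{-1}$ parallel tubes of common width $w$, so $\sum w(T_i)^s\sim w^{s-1}\to 0$ for $s>1$; hence $\dim_{\rm T} K\le 1$. Moreover, any Hausdorff cover of $K$ by balls of radii $r_i$ is simultaneously a cover by tubes of the same widths (because $B(x,r)$ is contained in the $r$-neighbourhood of any line through $x$), which gives $\dim_{\rm T} K\le \dimh K$. Combining, $\dim_{\rm T} K\le s^\ast$.

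For the lower bound, fix $\delta>0$. The plan is to exhibit a probability measure $\mu$ supported on $K$ which satisfies the uniform tube-Frostman estimate
\[
\mu(T)\le C_\delta\, w(T)^{s^\ast-\delta}\quad\text{for every tube } T\subset\R^2.
\]
Granted this, any countable tube cover $\{T_i\}$ of $K$ will satisfy $1=\mu(K)\le \sum_i\mu(T_i)\le C_\delta\sum_i w(T_i)^{s^\ast-\delta}$, so $\dim_{\rm T} K\ge s^\ast-\delta$; letting $\delta\to 0$ will then yield $\dim_{\rm T} K\ge s^\ast$.

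To construct $\mu$ we take a self-similar measure on $K$ whose dimension lies within $\delta/4$ of $\dimh K$ (so that $\min(1,\dim\mu)>s^\ast-\delta/2$); note that the IFS generating $\mu$ still has infinite rotation group. The crucial input from \cite{Shmerkin19} is that for every $q>1$, the projection $P_\theta\mu$ has $L^q$-dimension $\min(1,\dim\mu)$, with constants uniform in the direction $\theta$. Choosing $q$ large enough that $(1-1/q)(s^\ast-\delta/2)>s^\ast-\delta$ and applying the $L^q$-bound to dyadic partitions of $\R$, one obtains the pointwise Frostman estimate $P_\theta\mu(I)\le C\,|I|^{s^\ast-\delta}$ uniformly over intervals $I$ and directions $\theta$. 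Since every tube $T$ of width $w$ with core in direction $v$ is contained in $P_{v^\perp}^{-1}(J)$ for some interval $J$ of length $\sim w$, the desired estimate $\mu(T)\lesssim w(T)^{s^\ast-\delta}$ follows. The main obstacle is extracting from \cite{Shmerkin19} the correct statement — a \emph{uniform in direction} $L^q$-dimension bound for projections of self-similar measures whose generating IFS spans an infinite rotation group — and carefully passing from an $L^q$ estimate on dyadic partitions to a pointwise Frostman bound on arbitrary intervals; the remainder of the argument is routine bookkeeping.
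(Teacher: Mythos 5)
Your overall strategy is sound and, at bottom, rests on the same external input as the paper: Shmerkin's $L^q$/projection machinery from \cite{Shmerkin19}. The packaging differs. You build a measure $\mu$ on $K$ satisfying a tube--Frostman bound $\mu(T)\lesssim w(T)^{s^\ast-\delta}$ and conclude by a mass-distribution argument, whereas the paper works directly with the set: it quotes \cite[Lemma 1.7, Lemma 1.8, Theorem 8.2]{Shmerkin19} to bound the number of $w$-balls needed to cover $K\cap T$ for a tube of width $w$, and then derives the lower bound on $\dim_{\rm T}K$ by contradiction with $\dimh K$. These are two faces of the same estimate (your Frostman bound for $P_\theta\mu$ is exactly what drives the paper's covering count), and your version is arguably the cleaner way to state the conclusion. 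Your upper bound $\dim_{\rm T}K\le\min(1,\dimh K)$ and the deduction ``Frostman for tubes $\Rightarrow$ lower bound on tube dimension'' are both correct.

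There is, however, one genuine gap: the very first step of your lower bound, ``take a self-similar measure on $K$ whose dimension lies within $\delta/4$ of $\dimh K$''. For a general overlapping planar self-similar set this existence is not free, and moreover the theorem you want to quote from \cite{Shmerkin19} gives that all projections of $\mu$ have $L^q$ dimension $\min(1,\dim_{L^q}\mu)$ --- so you need $\dim_{L^q}\mu$ (for your large $q$), not merely the entropy/Hausdorff dimension of $\mu$, to be close to $\dimh K$. The paper resolves both issues at once by first passing, via \cite[Proposition 6]{PeSh} (plus the additional argument in the proof of \cite[Theorem 2]{PeSh}), to a homogeneous subsystem satisfying the open set condition, with attractor of Hausdorff dimension arbitrarily close to $\dimh K$ and still generating an infinite (dense) rotation group; for the uniform self-similar measure on that subsystem all the $L^q$ dimensions equal the Hausdorff dimension of the attractor. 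You should insert this reduction at the start of your lower-bound argument (and note explicitly that the subsystem retains infinite rotation group, since that hypothesis is needed to invoke \cite[Theorem 8.2]{Shmerkin19}). With that addition, the rest of your sketch --- uniformity in $\theta$, passing from dyadic $L^q$ bounds to a pointwise Frostman bound on arbitrary intervals, and covering a tube by a preimage $P_{v^\perp}^{-1}(J)$ --- is routine and correct.
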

\begin{proof}
We may assume that $K$ is a homogeneous self-similar set and it satisfies the open set condition, since otherwise we can apply \cite[Proposition 6]{PeSh} (plus a small additional argument that can be found in \cite[Proof of Theorem 2]{PeSh}) to get a subset of $K$ which satisfies these properties and has Hausdorff dimension arbitrarily close to that of $K$.  It follows from \cite{Shmerkin19} that for each $\varepsilon$ there is $C=C(\varepsilon)>0$, such that for each tube $T$ of width $w$, the intersection $K\cap T$ can be covered by $C w^{-\varepsilon}$ balls of radius $w$ if $\dimh(K)\le 1$, and by $C w^{1-\dimh K-\varepsilon}$ balls of radius $w$ if $\dimh(K)>1$. This follows by combining \cite[Lemma 1.7, Lemma 1.8 and Theorem 8.2]{Shmerkin19} applied to the uniform self-similar measure on $K$. From this, it is not hard to show that the tube dimension of $K$ is at least $\min(1,\dimh(K))$; since the opposite bound is obvious, this completes the proof.
\end{proof}

\begin{remark}
It remains a challenging open problem to determine if all the self-similar sets in Proposition \ref{prop:Pablo} are non tube-null.
\end{remark}

\begin{figure}[H]
	\includegraphics[width=0.9\textwidth]{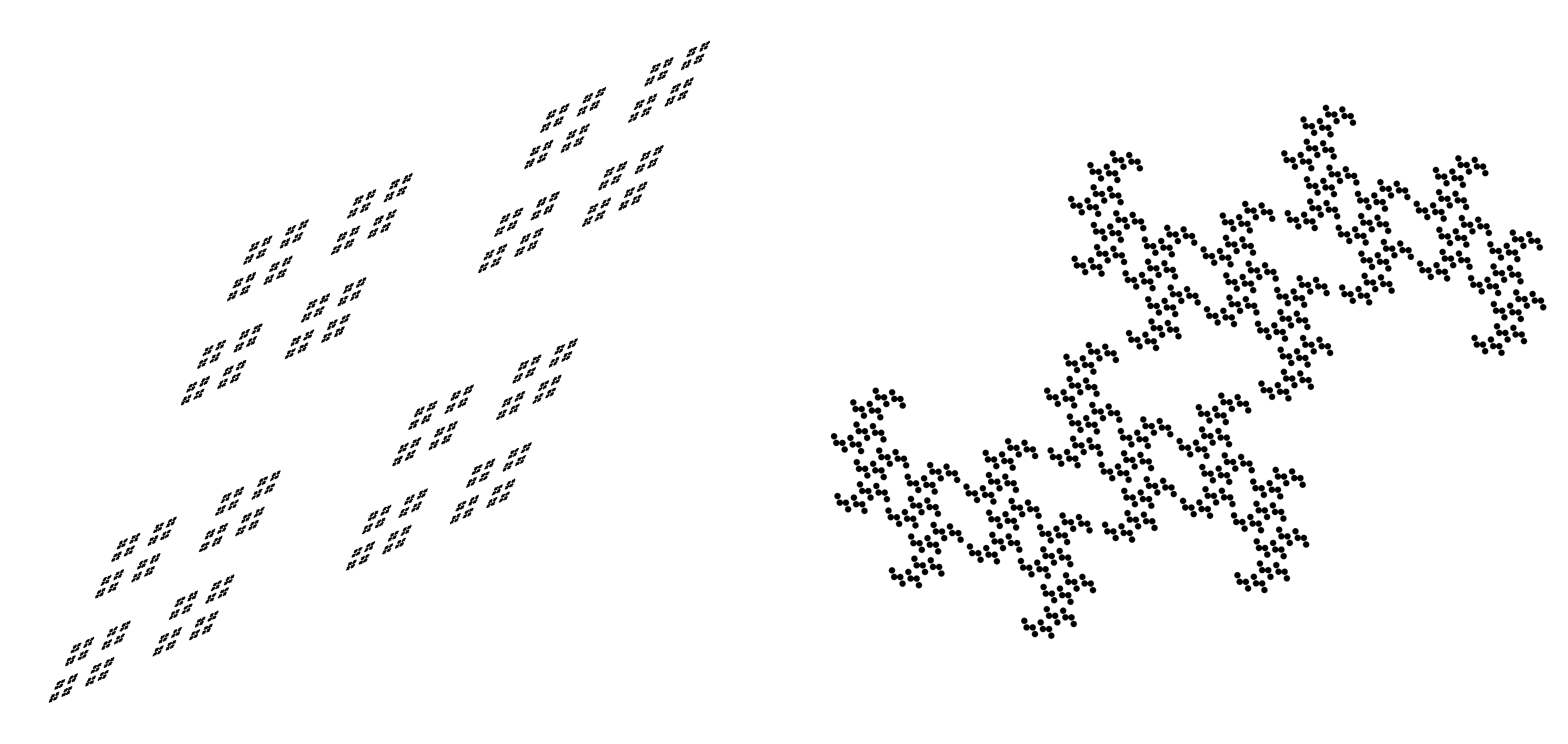}
	\caption{The self-similar set on the left has no rotations, the contraction ratio is $0.35$, its dimension is $\approx 1.32$, and it can be checked that all its projections are intervals. By Proposition \ref{prop: homo-small}, this set is tube-null, and even has tube dimension $<1$. The self-similar set on the right has irrational rotations and dimension $>1$; by Proposition \ref{prop:Pablo}, it has tube dimension $1$, but we do not know if it is tube-null.}
	\label{sss}
\end{figure}

\subsection{Bounding the number of directions in Theorem \ref{thm-principal-1}}\label{sec:R_0}
In the proof of Theorem \ref{thm-principal-1}, it was shown that $K$ may be covered by finitely many sets, each of them projecting onto a set of Hausdorff dimension $<c<1$ in some direction. We now briefly discuss how to derive quantitative estimates for the number of sets required in this covering, that is, the number $R_0$ in Lemma \ref{proj sing corollary}.

Let $\psi:\R^d \to \R$ be a function with absolutely converging Fourier series supported on $[0,1]^d \setminus K$, such that $\int \psi \,d\mathcal{L} = 1$. Using Parseval's equality, we see that for any probability measure $\nu$ supported on $K$,
\begin{align*}
0 = \int \psi(x) \,d\nu(x) = \sum_{n \in \Z^d} \widehat{\psi}(n)\overline{ \widehat{\nu}(n)}\,.
\end{align*}
Separating $\widehat{\psi}(0) = 1$ from the sum, we obtain the inequality
$$
1 \leq \sum_{n \neq 0} |\widehat{\psi}(n)| |\widehat{\nu}(n)|.
$$
From this, one can deduce that $R_0$ must be such that $\sum_{|n| \geq R_0} |\widehat{\psi}(n)| \geq 1$, using the absolute convergence of the Fourier series of $\psi$ and the fact that $|\hat{\nu}(n)| \leq 1$.

For example, replacing $\psi$ with a normalized product of one-dimensional tent functions supported on a cube in $[0,1]^d \setminus K$ of side-length $1/N$ and corners in $\lbrace 0, 1/N, \ldots, 1 \rbrace^d$, through a straightforward calculation of Fourier coefficients one obtains the bound
\begin{equation}\label{eq:R_0}
R_0 \leq 2^{d+1} N^{2d}.
\end{equation}
Using a compactly supported smooth bump function, the bound can be improved to
\begin{equation}\label{eq:R_1}
R_0\leq C_{\varepsilon,d} N^{1+\varepsilon}
\end{equation}
for any $\varepsilon>0$.

Notice that these bounds are valid for self-similar $T_N$-invariant sets. A general closed $T_N$-invariant set $L$ is a subset of some $T_N^m$-invariant self-similar set, where
\[m=m(L)=\min\left\{k,L\cap N^{-k}([0,1]^d+i)=\varnothing\text{ for some }i\in\{0,1,\ldots,N^k -1\}^d\right\}\,,\]
and so one has to replace $N$ by $N^m$ in the bounds \eqref{eq:R_0}--\eqref{eq:R_1}.

\subsection{Dimension drop in non-principal directions}
\label{ss:dim-drop-non-principal}

We saw in Proposition \ref{prop:dimension-drop} that for any $T_N$-invariant measure $\mu$ there is a rational direction $v$ such that $\dim P_v\mu<1$. We can also characterize those $T_N$-invariant measures for which such a dimension drop occurs in a non-principal direction:

\begin{lemma} \label{lem:dim-drop-non-principal}
Let $\mu$ be a $T_N$-invariant measure on $[0,1]^2$. Then the following are equivalent:
\begin{enumerate}
  \item The measure $\mu$ is a convex combination of measures of the form $\mu_1\times \lambda$ and $\lambda \times \mu_2$, where $\lambda$ denotes Lebesgue measure on $[0,1]$, and $\mu_i$ are $T_N$-invariant measures on $[0,1]$.
  \item $\dim P_v\mu=1$ for all $v=(v_1,v_2)\in\R^2$ such that $v_1 v_2\neq 0$.
\end{enumerate}
\end{lemma}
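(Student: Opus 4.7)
My plan is to prove the two implications separately. The direction $(1)\Rightarrow(2)$ is the easier one. For any $v=(v_1,v_2)$ with $v_2\neq 0$, the projection $P_v(\mu_1\times\lambda)$ is the distribution of $v_1X_1+v_2X_2$, where $X_1\sim\mu_1$ and $X_2\sim\lambda$ are independent; this is the convolution of the law of $v_1X_1$ with a scaled copy of Lebesgue measure on an interval, and is therefore absolutely continuous. A symmetric argument applies to $P_v(\lambda\times\mu_2)$ when $v_1\neq 0$. Since convex combinations of absolutely continuous measures remain absolutely continuous, $P_v\mu\ll\mathcal{L}^1$, and in particular $\dim P_v\mu=1$ whenever $v_1v_2\neq 0$.

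For $(2)\Rightarrow(1)$, my strategy is to first analyze the Fourier coefficients of $\mu$. The key step is to show that $\widehat{\mu}(n)=0$ for every $n=(n_1,n_2)\in\Z^2$ with $n_1n_2\neq 0$. For such integer $n$, the projected IFS $\mathcal{F}_n$ satisfies the WSC by Lemma \ref{wsc satisfied}, so Proposition \ref{prop-wsc-abs-cont} combined with hypothesis $(2)$ yields $P_n\mu\ll\mathcal{L}^1$; the Riemann--Lebesgue lemma then gives $\widehat{P_n\mu}(k)\to 0$ as $|k|\to\infty$. On the other hand, $T_N$-invariance of $\mu$ gives $\widehat{\mu}(Nn)=\widehat{\mu}(n)$ for every $n\in\Z^2$, so iterating one has $\widehat{P_n\mu}(N^k)=\widehat{\mu}(N^kn)=\widehat{\mu}(n)$, which is constant in $k$; this forces $\widehat{\mu}(n)=0$.

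By matching Fourier coefficients, this vanishing yields the measure-theoretic identity
\begin{equation*}
\mu=\mu_1'\times\lambda+\lambda\times\mu_2'-\lambda\times\lambda,
\end{equation*}
where $\mu_i'$ denotes the $i$-th marginal of $\mu$ (a $T_N$-invariant probability measure on $[0,1]$). The main obstacle is converting this signed combination into a genuine convex combination. Applying $\mu\geq 0$ to product sets $E\times F$ of positive Lebesgue measure gives $\mu_1'(E)/\lambda(E)+\mu_2'(F)/\lambda(F)\geq 1$; taking $E,F$ of small Lebesgue measure and disjoint from the respective singular supports of $\mu_i'$, this forces $\beta_1+\beta_2\geq 1$, where $\beta_i$ is the essential infimum (with respect to $\lambda$) of the Radon--Nikodym derivative of the absolutely continuous part of $\mu_i'$. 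For any $\alpha\in[\max(0,1-\beta_1),\min(1,\beta_2)]$, which is non-empty by the above, the measures
\begin{equation*}
\mu_1=\tfrac{1}{\alpha}\bigl(\mu_1'-(1-\alpha)\lambda\bigr),\qquad \mu_2=\tfrac{1}{1-\alpha}\bigl(\mu_2'-\alpha\lambda\bigr)
\end{equation*}
(with the usual convention when $\alpha\in\{0,1\}$) are $T_N$-invariant probability measures on $[0,1]$, and a direct expansion using the Fourier identity verifies $\mu=\alpha(\mu_1\times\lambda)+(1-\alpha)(\lambda\times\mu_2)$, giving $(1)$.
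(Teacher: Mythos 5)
Your proof is correct, and its first two stages coincide with the paper's: the direction $(1)\Rightarrow(2)$ via convolution with $\lambda$, and the use of the WSC, Proposition \ref{prop-wsc-abs-cont} and the Riemann--Lebesgue lemma together with $\widehat{P_n\mu}(N^k)=\widehat{\mu}(N^kn)=\widehat{\mu}(n)$ to kill all off-axis Fourier coefficients and arrive at the signed identity $\mu=\mu_1'\times\lambda+\lambda\times\mu_2'-\lambda\times\lambda$. Where you genuinely diverge is in upgrading this identity to a convex combination. The paper first reduces to ergodic $\mu$; then the marginals $\widetilde{P}_j\mu$ are ergodic, hence each is either equal to $\lambda$ or mutually singular to it, and if both were singular one could pick $A_j$ with $\lambda(A_j)=1$, $\widetilde{P}_j\mu(A_j)=0$ and reach the absurdity $\mu(A_1\times A_2)=-1$; so an ergodic $\mu$ is a single product, and the general case is handled by the ergodic decomposition. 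You instead work directly with a possibly non-ergodic $\mu$: testing $\mu\ge 0$ on rectangles $E\times F$ on which the densities of the absolutely continuous parts of the marginals are near their essential infima $\beta_1,\beta_2$ yields $\beta_1+\beta_2\ge 1$, which lets you peel off Lebesgue components of total weight one and write the convex combination explicitly; I checked that the interval of admissible $\alpha$ is nonempty, that $\mu_i'\ge\beta_i\lambda$ makes $\mu_1,\mu_2$ genuine $T_N$-invariant probability measures, and that the expansion recovers $\mu$. Your route buys a proof that bypasses the ergodic decomposition entirely (and with it the mildly delicate, implicit point that almost every ergodic component inherits hypothesis $(2)$); the paper's route buys the sharper structural conclusion that an ergodic measure satisfying $(2)$ is exactly one product $\mu_1\times\lambda$ or $\lambda\times\mu_2$, not a mixture. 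One wording quibble: in choosing $E$ and $F$ what matters is not small Lebesgue measure but that the densities are within $\varepsilon$ of their essential infima there and that the sets avoid a $\lambda$-null carrier of the singular parts; the sets $\{f_i\le\beta_i+\varepsilon\}$ minus such a carrier do the job.
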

\begin{proof}
If $\mu=\mu_1\times\lambda$ and $v$ is a non-principal vector, then $P_v\mu$ is the convolution of scaled copies of $\mu_1$ and $\lambda$, in particular it is absolutely continuous and thus $\dim P_v\mu=1$. Likewise for $\lambda\times \mu_2$ and convex combinations of such measures.

Now let $\mu$ be a $T_N$-invariant measure such that $\dim P_v\mu=1$ for all non-principal directions $v$. We may assume $\mu$ is ergodic, for otherwise we can apply this case to the ergodic decomposition. The proofs of Lemma \ref{proj sing corollary} and Proposition \ref{prop:dimension-drop} show that if $\widehat{\mu}(v)\neq 0$, then $\dim P_v\mu<1$. Hence, all non-zero Fourier coefficients of $\mu$ must lie on some coordinate axis. If we denote the projection to the $j$-th coordinate by $\widetilde{P}_j$, this implies that
\begin{equation} \label{eq:mu-decomposition}
\mu = \widetilde{P}_1\mu \times \lambda + \lambda\times \widetilde{P}_2\mu -\lambda\times\lambda.
\end{equation}
Indeed, using that all Fourier coefficients for frequencies outside of the axes are zero, it is easy to check that the measure on the right-hand side has the same Fourier coefficients as $\mu$. Suppose neither $\widetilde{P}_1\mu$ nor $\widetilde{P}_2\mu$ are Lebesgue measure. The measures $\widetilde{P}_j\mu$ are $T_N$-invariant and ergodic on $[0,1]$ (since $\mu$ is ergodic and $\widetilde{P}_j$ is a factor map). Since they are not equal to $\lambda$, they must be mutually singular to it. Hence we can find Borel sets $A_1, A_2$ such that $\widetilde{P}_j\mu(A_j)=0$ and $\lambda(A_j)=1$ for $j=1,2$. Using \eqref{eq:mu-decomposition}, this implies the absurd fact that $\mu(A_1\times A_2)=-1$. Hence either $\widetilde{P}_1\mu$ or $\widetilde{P}_2\mu$ must be Lebesgue measure, and in light of \eqref{eq:mu-decomposition} this completes the proof.
\end{proof}


Let $\dimh\mu$ denote the (lower) Hausdorff dimension of a measure $\mu$; recall that it is defined as
\[
\dimh\mu = \inf\{ \dimh A: \mu(A)>0\}.
\]
Let $\mu$ be a $T_N$-invariant and ergodic measure. A key hypothesis in a recent joint equidistribution result of Algom \cite[Theorem 1.1]{Algom} is that $\dimh P_v\mu<\dimh\mu$ for some non-principal direction $v$. As a direct consequence of Lemma \ref{lem:dim-drop-non-principal}, we can characterize such measures when $\dimh\mu=1$:
\begin{corollary}
Let $\mu$ be $T_N$-invariant and ergodic on $[0,1]^2$ with $\dimh\mu=1$. Then the following are equivalent:
\begin{enumerate}
  \item $\mu$ is equal to either $\nu\times\lambda$ or $\lambda\times \nu$ for some $T_N$-invariant and ergodic measure $\nu$ of zero Hausdorff dimension on $[0,1]$.
  \item $\dimh P_v\mu=\dimh \mu=1$ for all non-principal directions $v$.
\end{enumerate}
\end{corollary}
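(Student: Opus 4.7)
The plan is to derive the corollary as a direct consequence of Lemma \ref{lem:dim-drop-non-principal}, treating the two implications separately. For $(1) \Rightarrow (2)$: if $\mu = \nu \times \lambda$ (or the swapped variant) with $\dim_H \nu = 0$, then any non-principal projection $P_v \mu$ is, up to an affine change of coordinate on $\R$, the convolution of rescaled copies of $\nu$ and of Lebesgue measure, hence absolutely continuous; therefore $\dim_H P_v \mu = 1$. Moreover, the product formula $\dim_H(\nu \times \lambda) = \dim_H \nu + 1$, which for the paper's definition of $\dim_H$ of a measure follows from Marstrand's slicing inequality in one direction and the obvious upper bound in the other, gives $\dim_H \mu = 1$.

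For $(2) \Rightarrow (1)$, the key preparatory step is to bridge the gap between the lower entropy dimension $\dim$ used in Lemma \ref{lem:dim-drop-non-principal} and the Hausdorff dimension $\dim_H$ appearing in the corollary. For any probability measure $\rho$ on $\R$, the quantity $\dim_H \rho$ (with the paper's convention) coincides with the $\rho$-essential infimum of the lower pointwise dimension $\underline{d}(\rho,x) = \liminf_{r\to 0}\log\rho(B(x,r))/\log r$, a standard consequence of the mass distribution principle. Applying Fatou's lemma to the identity $H_n(\rho) = \int -\log \rho(D_n(x))\, d\rho(x)$ yields $\dim \rho \ge \int \underline{d}(\rho,x)\, d\rho(x) \ge \dim_H \rho$. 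Since $P_v \mu$ lives on a bounded interval, $\dim P_v \mu \le 1$, so the hypothesis gives $\dim P_v \mu = 1$ for every non-principal $v$.

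Lemma \ref{lem:dim-drop-non-principal} then expresses $\mu$ as a convex combination of measures of the form $\nu_1 \times \lambda$ and $\lambda \times \nu_2$ with $T_N$-invariant $\nu_i$. Since $\mu$ is ergodic, it is extremal in $\mathcal{M}_{\rm inv}([0,1]^2, T_N)$, so the decomposition collapses to a single summand, producing $\mu = \nu \times \lambda$ or $\mu = \lambda \times \nu$. A short argument shows that ergodicity of $\mu$ transfers to $\nu$: any non-trivial ergodic decomposition of $\nu$ would produce a non-trivial convex decomposition of $\mu$ by distinct invariant measures. Finally, combining $\dim_H \mu = 1$ with the product formula $\dim_H(\nu \times \lambda) = \dim_H \nu + 1$ forces $\dim_H \nu = 0$. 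The main delicate point is the inequality $\dim \rho \ge \dim_H \rho$ on the line, which boils down to the local-dimension characterization of $\dim_H$ together with Fatou's lemma; everything else is essentially bookkeeping around the previous lemma.
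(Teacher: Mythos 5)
Your proof is correct and follows essentially the same route as the paper: both directions are read off from Lemma \ref{lem:dim-drop-non-principal} (after bridging the entropy dimension $\dim$ used there with the Hausdorff dimension $\dimh$ in the statement) together with a product formula for measures of the form $\nu\times\lambda$. The only difference is in the supporting facts: the paper invokes $\dimh\le\dim$ \emph{with equality for $T_N$-invariant ergodic measures} (exact dimensionality) plus the entropy-dimension product formula $\dim(\nu\times\lambda)=\dim\nu+1$, whereas you use only the elementary one-sided inequality $\dim\ge\dimh$ (your Fatou argument is fine, since $-\log\rho(D_n(x))\ge 0$ and $D_n(x)\subset B(x,2^{-n})$) combined with the Hausdorff-dimension product formula $\dimh(\nu\times\lambda)=\dimh\nu+1$; both packages of standard facts close the argument.
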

\begin{proof}
This is immediate from Lemma \ref{lem:dim-drop-non-principal} and the following well known properties: (a) $\dimh(\nu)\le\dim(\nu)$ for all measures $\nu$, with equality for $T_N$-invariant and ergodic measures, (b) $\dim(\nu\times\lambda)=\dim(\nu)+1$.
\end{proof}

\subsection{Isotropic doubling measures}
As our last remark, we provide an application of Lemma \ref{proj sing corollary} to a geometric analysis problem. A measure $\mu$ on $\R^d$ is called \textit{isotropic doubling} if there is a constant $C<\infty$ such that for all pairs of congruent rectangles $Q_1$ and $Q_2$, with $Q_1\cap Q_2\neq\varnothing$, it holds that
\begin{equation}\label{doubling_condition}
\frac1C \leq \frac{\mu(Q_1)}{\mu(Q_2)} \leq C\,.
\end{equation}
This notion was defined by Kovalev, Maldonado, and Wu in  \cite{KMW} in connection to quasiconformal mappings.
Note that in order for \eqref{doubling_condition} to make sense, the measure $\mu$ must be fully supported. However, for the following discussion, it does not affect the generality if we restrict  $\mu$ to $[0,1]^d$ and consider only rectangles $Q_1,Q_2\subset[0,1]^d$.
In \cite{KMW} the authors propose the following open problem:
\[\text{Are there isotropic doubling measures in }\R^d\text{ with Hausdorff dimension }<d\,?\]
As shown in \cite{KMW}, isotropic doubling measures can be singular with respect to the Lebesgue measure. On the other hand, it is easy to see that on $\R^d$ they must have Hausdorff dimension at least $d-1$. In fact, their orthogonal projections onto hyperplanes must be absolutely continuous with respect to the $(d-1)$-dimensional Lebesgue measure.  Combining with Lemma \ref{proj sing corollary} this leads to the following

\begin{proposition}
For $d\ge 2$, the Lebesgue measure is the only non-zero $T_N$-invariant isotropic doubling measure on $[0,1]^d$.
\end{proposition}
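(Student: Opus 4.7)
Let $\mu$ be a non-zero $T_N$-invariant isotropic doubling measure on $[0,1]^d$; after normalising we may assume $\mu$ is a probability measure. The plan is Fourier-analytic: I will show that $\hat\mu(w)=0$ for every $w\in\Z^d\setminus\{0\}$ and then invoke Fourier uniqueness on the torus to conclude $\mu=\mathcal{L}^d|_{[0,1]^d}$.

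First I verify that $P_v\mu\ll\mathcal{L}^1$ for every direction $v\in\R^d\setminus\{0\}$. For $d=2$ a hyperplane is a line and this is exactly the cited absolute continuity of hyperplane projections of isotropic doubling measures. For $d\ge 3$, given a direction $v$, I choose any hyperplane $H$ containing the line $\R v$; the projection of $\mu$ to $H$ is absolutely continuous on $H\cong\R^{d-1}$ by the same cited fact, and one further orthogonal projection inside $H$ onto $\R v$ preserves absolute continuity by Fubini.

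The decisive step is to combine this with $T_N$-invariance and Riemann--Lebesgue in the spirit of Lemma \ref{proj sing corollary}. Fix $w\in\Z^d\setminus\{0\}$. Because $x\mapsto e^{-2\pi i\langle w,x\rangle}$ is $\Z^d$-periodic, $T_N$-invariance of $\mu$ yields $\hat\mu(Nw)=\hat\mu(w)$, and iterating gives $\hat\mu(N^kw)=\hat\mu(w)$ for every $k\ge 0$. Combining with the identity $\widehat{P_w\mu}(\xi)=\hat\mu(\xi w)$ for $\xi\in\R$ produces
\[\widehat{P_w\mu}(N^k)=\hat\mu(w)\qquad\text{for every }k\ge 0.\]
Since $P_w\mu\ll\mathcal{L}^1$, the Riemann--Lebesgue lemma forces $\widehat{P_w\mu}(N^k)\to 0$ as $k\to\infty$, so $\hat\mu(w)=0$. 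As $w$ was arbitrary, all non-zero Fourier coefficients of $\mu$ vanish, and Fourier uniqueness on the torus completes the proof.

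I do not expect any serious obstacle. The argument is a self-contained version of the first assertion of Lemma \ref{proj sing corollary}, which as stated requires the invariant set $K$ to be a proper subset of $[0,1]^d$ but whose Fourier-analytic proof applies verbatim to the fully supported isotropic doubling measure $\mu$. The only mildly delicate point is the reduction from hyperplane-projections to line-projections when $d\ge 3$, which is handled by the factoring described above.
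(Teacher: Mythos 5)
Your proof is correct and follows essentially the same route as the paper: the paper likewise passes from the absolute continuity of hyperplane projections of an isotropic doubling measure to that of all line projections, and then runs the Fourier/Riemann--Lebesgue mechanism of Lemma \ref{proj sing corollary} to kill every non-zero Fourier coefficient. You are right to observe that only the first assertion of that lemma is needed and that its proof requires no properness of the support --- a point the paper leaves implicit when it simply cites the lemma.
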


\begin{proof}
As noted above, the orthogonal projection of an isotropic doubling measure on $[0,1]^d$ onto any hyperplane is absolutely continuous. Thus the proposition follows from Lemma \ref{proj sing corollary}: if the projection onto a line is not absolutely continuous, the same is true for the projection onto a hyperplane containing the line.
\end{proof}

We note that the condition \eqref{doubling_condition} being true for all rectangular shapes is essential here. In particular, there is no control over the ratios between the side-lengths. If we relax the definition and  require \eqref{doubling_condition} only for rectangles where the aspect ratio is bounded, then we arrive at the definition of classical \emph{doubling measures} (see e.g. \cite{KMW,KRS}). For $N>2$ there are $T_N$-invariant doubling measures with any dimension $0<s\le d$. For instance, we may consider the (trivial) IFS $\cF$ defined as in \eqref{eq-Nadic-IFS}, with $\#\Gamma = N^d$, which leaves $[0,1]^d$ invariant. Define $\mu$ as the self-similar measure associated to the positive probabilities $(p_\lambda)_{\lambda \in \Lambda}$, $\sum_{\lambda \in \Lambda} p_\lambda = 1$ , chosen so that  $p_\lambda$ are equal whenever $f_\lambda([0,1]^d)$ intersects the boundary of $[0,1]^d$. Each such $\mu$ is $T_N$-invariant and doubling, and depending on $(p_\lambda)$, the dimension can take any value in $]0,d]$.


\end{document}